\documentclass[12pt,final]{article}

\usepackage{graphicx}

\usepackage{bbm}

\usepackage{amsmath}
\usepackage{amsthm}
\usepackage{amsfonts}
\usepackage{amssymb}
\usepackage{xcolor}
\usepackage{comment}
\usepackage{tcolorbox}
\usepackage{enumerate}
\usepackage{mdframed}

\newcommand{\eee}{{\rm e}}

\newcommand{\me}{\mathbb{E}}
\newcommand{\mn}{\mathbb{N}}
\newcommand{\mr}{\mathbb{R}}

\DeclareMathOperator{\1}{\mathbbm{1}}

\newcommand{\mmp}{\mathbb{P}}

\newtheorem{thm}{Theorem}[section]
\newtheorem{lemma}[thm]{Lemma}

\newtheorem{cor}[thm]{Corollary}

\newtheorem{assertion}[thm]{Proposition}
\theoremstyle{definition}

\theoremstyle{remark}
\newtheorem{rem}[thm]{Remark}

\begin{document}
\title{On the tails of Dickman-like perpetuities}\date{}
\author{Alexander Iksanov\footnote{Faculty of Computer Science and Cybernetics, Taras Shevchenko National University of Kyiv, Ukraine; e-mail address:
iksan@univ.kiev.ua} \ \ and \ \ Oleh Iksanov\footnote{Faculty of Computer Science and Cybernetics, Taras Shevchenko National University of Kyiv, Ukraine; e-mail address: vivec1x@knu.ua}}
\maketitle

\begin{abstract}
Using a probabilistic technique based on exponential change of measure,
we derive precise tail asymptotics of some perpetuities with distributions close to the Dickman distribution.
\end{abstract}

\noindent Key words: exponential change of measure; perpetuity; selfdecomposability; superexponential tail

\noindent \noindent 2020 Mathematics Subject Classification:
Primary 60F10, 60F05; Secondary 60E07. 

\section{Introduction}\label{sect:intro}

Let $(M_k, Q_k)_{k\geq 1}$ be independent copies of an $\mr^2$-valued random vector $(M,Q)$ defined on a common probability space $(\Omega, \mathcal{F},  \mmp)$. Whenever the random series $$Q_1+\sum_{k\geq 1}M_1\cdot\ldots\cdot M_kQ_{k+1}$$ converges almost surely (a.s.), its sum that, denoted by -,= is called {\it  perpetuity}. The term `perpetuity' is justified by the fact that such random series
occur in the realm of insurance and finance as sums of discounted payment streams. A theory of perpetuities, accumulated up to 2016, is well documented in the monographs \cite{Buraczewski etal:2016} and \cite{Iksanov:2016}. An incomplete list of more recent publications includes \cite{Bihan+Kolodziejek:2025, Burdzy+Kolodziejek+Tadic:2022, Damek+Kolodziejek:2020, Iksanov+Marynych+Nikitin:2023, Iksanov+Nikitin+Samoilenko:2021, Kevei:2017}. An interesting recent generalization of perpetuities can be found in \cite{Ma+Maillard:2025}.

In Theorem 3.1(a) of \cite{Goldie+Grubel:1996} it is shown that if $|Z|<\infty$ a.s., $M$ and $Q$ are independent, $\mmp\{M\in [0,1]\}=1$, the function $\epsilon \mapsto \epsilon^{-1}\mmp\{M\in [1-\epsilon, 1]\}$ is bounded away from $0$ and $\infty$ for small positive $\epsilon$ and the constant $b:=\sup\{x: \mmp\{Q\geq x\}>0\}$ satisfies $b\in (0,\infty)$, then
\begin{equation}\label{eq:goldie}
-\log \mmp\{Z>t\}~\sim~ b^{-1}t\log t,\quad t\to\infty,
\end{equation}
that is, the right distribution tail of $Z$ exhibits a Poisson-like decay.

We became curious whether a precise, rather than logarithmic, asymptotic can be derived under comparable assumptions. The present paper answers this question in the affirmative. Here are our

\noindent {\bf Standing assumptions}:
\begin{itemize}

\item $M$ and $Q$ are independent;

\item $M$ has a beta distribution with parameters $\alpha>0$ and $1$, that is, $\mmp\{M\in{\rm d}x\}=\alpha x^{\alpha-1}\1_{(0,1)}(x){\rm d}x$;

\item $\me [\log^+|Q|]<\infty$; 

\item $b\in (0,\infty)$.
\end{itemize}

Here, $\log^+ y=\log y$ if $y\geq 1$ and $=0$ if $y\in (0,1)$; $\1_A(x)=1$ if $x\in A$ and $=0$ if $x\notin A$. Whenever $M$ has the beta distribution with parameters $\alpha$ and $1$, we mark $M$, $Z$ and some other letters with the lower index $\alpha$, that is, we write $M_\alpha$ and $Z_\alpha$ in place of $M$ and $Z$. We call the corresponding perpetuity $Z_\alpha$ a {\it Dickman-like} perpetuity. The reason is that the variable $Z_\alpha-1$ arising in the case $\alpha=1$ and $Q=1$ a.s.\ has the {\it Dickman} distribution, whereas the term {\it generalized Dickman distribution} is sometimes used when $\alpha>0$ and $Q=1$. We refer to Theorem 4.7.7 on p.~90 and Section 4.7.8 on p.~93 of \cite{Vervaat:1972} or to a more recent work \cite{Pinsky:2018} for various properties of the generalized Dickman distribution. An even more recent survey pertaining to this distribution can be found in the introduction of \cite{Grahovac etal:2025}.

An extra bonus of our assumptions is a reasonably simple form of the moment generating function of $Z_\alpha$ given in part (b) of the following proposition. We recall that the distribution of a real-valued random variable $X$ is called {\it selfdecomposable} if, for each $c\in (0,1)$, $z\mapsto \me [\eee^{{\rm i}zX}]/\me [\eee^{{\rm i}czX}]$, $z\in\mr$ is the characteristic function of a proper probability distribution. Each selfdecomposable distribution is infinitely divisible with the L\'{e}vy measure $\nu$ given by $\nu({\rm d}x)=|x|^{-1}k(x){\rm d}x$, where $k$ is a nonnegative function which does not increase on $(0,\infty)$ and does not decrease on $(-\infty,0)$, see Corollary 15.11 on p.~95 in \cite{Sato:1999}. According to Example 27.8 on p.~177 in \cite{Sato:1999}, each nondegenerate selfdecomposable distribution on $\mr$ is absolutely continuous with respect to Lebesgue measure.
\begin{assertion}\label{prop:Dickman}
Let $\alpha>0$ and $Z_\alpha$ be the Dickman-like perpetuity, that is, it corresponds to $(M_\alpha,Q)$ satisfying the standing assumptions. Then

\noindent (a) $Z_\alpha-Q_1=\int_{[0,\infty)}\eee^{-y}{\rm d}A(y)$, where $A$ is a compound Poisson process with the jumps $Q_2$, $Q_3,\ldots$ and the jump times $-\log M_{\alpha,\,1}$, $-\log(M_{\alpha,\,1}M_{\alpha,\,2}),\ldots$; in particular, the distribution of $Z_\alpha-Q_1$ is selfdecomposable; its L\'{e}vy measure $\nu^\ast_\alpha$ is given by $\nu^\ast_\alpha({\rm d}x)=|x|^{-1}k_\alpha(x){\rm d}x$, where $k_\alpha(x)=\alpha \mmp\{Q>x\}$ for $x\in (0,\infty)$ and $k_\alpha(x)=\alpha\mmp\{Q\leq x\}$ for $x\in (-\infty, 0)$.

\noindent (b) $\me [\eee^{sZ_\alpha}]<\infty$ for all $s\geq 0$ and
\begin{equation}\label{eq:mgf}
\me [\eee^{sZ_\alpha}]=\me [\eee^{sQ}]\exp\Big(\alpha\int_0^s \frac{\me [\eee^{yQ}]-1}{y}{\rm d}y\Big),\quad s\geq 0.
\end{equation}

\noindent (c) 
For $t>0$,
\begin{equation}\label{eq:dens}
\alpha^{-1}tq_\alpha(t)=\mmp\{Z_\alpha>t\}-\mmp\{Z_\alpha-Q_1>t\},
\end{equation}
where $q_\alpha$ denotes a density, continuous on $(0,\infty)$, of the distribution of $Z_\alpha-Q_1$.

Furthermore,
\begin{equation}\label{eq:asymp1}
\alpha^{-1}tq_\alpha(t)~\sim~\mmp\{Z_\alpha>t\},\quad t\to\infty.
\end{equation}
\end{assertion}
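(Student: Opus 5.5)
The argument rests on one structural observation, from which (a) and (b) follow almost directly; (c) is then obtained by differentiating a Laplace-transform identity, and \eqref{eq:asymp1} requires a genuine tail estimate.

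\textbf{Part (a).} Since $-\log M_\alpha$ is exponential with rate $\alpha$, the times $T_k:=-\log(M_{\alpha,1}\cdots M_{\alpha,k})$, $k\geq 1$, are the arrival times of a Poisson process of intensity $\alpha$. They are independent of $(Q_{k+1})_{k\geq1}$, so $A(y):=\sum_{k\geq1}Q_{k+1}\1_{\{T_k\leq y\}}$ is compound Poisson, and
\[
Z_\alpha-Q_1=\sum_{k\geq1}\eee^{-T_k}Q_{k+1}=\int_{[0,\infty)}\eee^{-y}{\rm d}A(y).
\]
Almost sure convergence follows from $\me\log^+|Q|<\infty$ (Borel--Cantelli). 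By the standard theory of Ornstein--Uhlenbeck type integrals (Sato, Theorem 17.5), $\int\eee^{-y}{\rm d}A(y)$ is selfdecomposable. Its L\'evy measure is $\nu^\ast_\alpha(B)=\int_0^\infty \alpha\,\mmp\{\eee^{-y}Q\in B\}{\rm d}y$. Substituting $x=\eee^{-y}q$ gives, for $x>0$, the density $|x|^{-1}\alpha\,\mmp\{Q>x\}$, and the analogous formula on the negative half-line.

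\textbf{Part (b).} By Campbell's formula,
\[
\log\me[\eee^{s(Z_\alpha-Q_1)}]=\alpha\int_0^\infty\big(\me[\eee^{s\eee^{-y}Q}]-1\big){\rm d}y.
\]
The substitution $u=s\eee^{-y}$ turns this into $\alpha\int_0^s(\me[\eee^{uQ}]-1)u^{-1}{\rm d}u$. This is finite for every $s\geq0$, because $Q\leq b$ makes $\me[\eee^{uQ}]$ finite for $u\geq0$, and $(\me[\eee^{uQ}]-1)/u$ is bounded near $0$ since $\me[Q\eee^{uQ}]$ is locally bounded there (the negative part contributes $|Q|\eee^{uQ}\le 1/(eu)$-type bounds; alternatively one truncates and uses monotone convergence). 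Independence of $Q_1$ then gives \eqref{eq:mgf}.

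\textbf{Part (c).} Let $\varphi(s)=\me[\eee^{s(Z_\alpha-Q_1)}]$. Differentiating the exponent gives
\[
s\varphi'(s)=\alpha\big(\me[\eee^{sQ}]-1\big)\varphi(s).
\]
Inverting transforms, the left side is the transform of $x\,q_\alpha(x)\,{\rm d}x$ and the right side is $\alpha$ times the transform of $\mmp\{Z_\alpha\in\cdot\}-\mmp\{Z_\alpha-Q_1\in\cdot\}$. The density $q_\alpha$ exists by selfdecomposability; its continuity on $(0,\infty)$ follows from the fact that $k_\alpha(0+)=\alpha\,\mmp\{Q>0\}$, together with Sato's results on selfdecomposable densities, or directly from the identity itself. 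For $t>0$, write $x q_\alpha(x)=\alpha\frac{{\rm d}}{{\rm d}x}(\ldots)$ in the sense of measures. Equivalently, integrate $s\varphi'$ against $\eee^{-sx}$ by parts, or use the form $\alpha^{-1}xq_\alpha(x){\rm d}x=(\mmp\{Z_\alpha\in {\rm d}x\}-\mmp\{Z_\alpha-Q_1\in{\rm d}x\})$ convolved appropriately. This yields $\alpha^{-1}tq_\alpha(t)=\mmp\{Z_\alpha>t\}-\mmp\{Z_\alpha-Q_1>t\}$, after checking that both sides vanish at $+\infty$. The cleanest route is to work with Fourier transforms, where everything is finite, and then identify densities.

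\textbf{Asymptotic \eqref{eq:asymp1}.} By (c), it suffices to show
\[
\mmp\{Z_\alpha-Q_1>t\}=o(\mmp\{Z_\alpha>t\}).
\]
I expect this to be the main obstacle. Choose $\epsilon>0$ with $p:=\mmp\{Q>b-\epsilon\}>0$. Then
\[
\mmp\{Z_\alpha>t\}\geq p\,\mmp\{Z_\alpha-Q_1>t-b+\epsilon\},
\]
so it is enough to prove that $\mmp\{Y>t\}/\mmp\{Y>t-\delta\}\to0$ for $Y=Z_\alpha-Q_1$ and fixed $\delta>0$, that is, superexponential decay of ratios. For this I would use an exponential change of measure. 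Let $\mmp_s$ be the tilted law of $Y$, with $s$ chosen so that $(\log\varphi)'(s)=t$; under \eqref{eq:mgf} this means $s\sim b^{-1}t\log t$-scale, with $\log\varphi(s)\approx\alpha\,\eee^{bs}/(bs)$. The variance $(\log\varphi)''(s)=\alpha\,\me[Q\eee^{sQ}]$ grows, but much more slowly than $s$ grows. A local CLT under $\mmp_s$ then gives
\[
\mmp\{Y>t\}\approx \frac{\varphi(s)\eee^{-st}}{s\sqrt{2\pi\sigma^2(s)}}.
\]
Hence the ratio of tails at $t$ and $t-\delta$ is roughly $\eee^{-s\delta}\to0$, because $s\to\infty$. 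Alternatively, and more robustly, one can use the crude bounds
\[
\mmp\{Y>t\}\leq\eee^{-s(t-\delta/2)}\,\me[\eee^{sY};\,Y>t-\delta/2]
\]
combined with Chebyshev under the tilted measure, which avoids a full local limit theorem. This is the step where I would spend the effort.
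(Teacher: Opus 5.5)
Your treatment of (a) and (b) follows the paper's route: the Poisson representation, plus standard facts on exponential integrals of L\'evy processes (the paper cites Jurek--Vervaat, you cite Sato/Campbell). Your Laplace-transform derivation of \eqref{eq:dens} from $s\varphi'(s)=\alpha(\me[\eee^{sZ_\alpha}]-\me[\eee^{s(Z_\alpha-Q_1)}])$ is a valid alternative, since all exponential moments are finite. The paper instead writes $Y:=Z_\alpha-Q_1=M_{\alpha,1}Z'_\alpha$ with $Z'_\alpha\stackrel{d}{=}Z_\alpha$ independent of $M_{\alpha,1}$, and differentiates
\[
\mmp\{Y>t\}=\alpha t^\alpha\int_t^\infty v^{-\alpha-1}\mmp\{Z_\alpha>v\}{\rm d}v .
\]
There is one slip in (b). The function $(\me[\eee^{uQ}]-1)/u$ is \emph{not} bounded near $0$ when $\me[Q^-]=\infty$, which the standing assumptions allow. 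Finiteness of $\int_0^s$ comes instead from $\me[\log^+|Q|]<\infty$: the negative part equals $\me\big[\int_0^{sQ^-}(1-\eee^{-v})v^{-1}{\rm d}v\big]\le 1+\me[\log^+(sQ^-)]$.

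The genuine gap is \eqref{eq:asymp1}. Your reduction to $\mmp\{Y>t\}/\mmp\{Y>t-\delta\}\to0$ for a fixed additive shift $\delta\in(0,b)$ is logically valid, but it asks for too much precision. The gain from such a shift is only about $\eee^{-\delta s_\alpha(t)}=t^{-\delta/b+o(1)}$, because $s_\alpha(t)\sim b^{-1}\log t$, not of order $t\log t$ as you wrote. So you need the tail of $Y$ with polynomial relative precision.
\begin{itemize}
\item The logarithmic asymptotics (Goldie--Gr\"{u}bel for $Z_\alpha$, Ohkubo for $Y$) have errors $o(t\log t)$, so they cannot supply this.
\item Your ``robust'' fallback also fails. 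Chebyshev under the tilted law localizes $Y$ only in windows of width comparable to its standard deviation. The tilted variance is $(\log\varphi)''(s_\alpha(t))\sim bt$, which grows much faster than $s_\alpha(t)$, not more slowly as you claimed. Comparing a Chernoff upper bound at $t$ with such a lower bound at $t-\delta$ therefore leaves a factor of order $\eee^{Cs_\alpha(t)t^{1/2}}$, which swamps $\eee^{-\delta s_\alpha(t)}$.
\end{itemize}
What remains is a local limit theorem under the tilted measure on the scale $1/s_\alpha(t)$. That is essentially Theorem \ref{thm:main} for $Y$, and you do not prove it.

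The missing idea is to compare $Y$ and $Z_\alpha$ multiplicatively rather than additively. Split the displayed integral at $ct$ with $c>1$ and use that $v\mapsto\mmp\{Z_\alpha>v\}$ is nonincreasing. This gives
\[
\frac{\mmp\{Y>t\}}{\mmp\{Z_\alpha>t\}}\le 1-c^{-\alpha}+c^{-\alpha}\,\frac{\mmp\{Z_\alpha>ct\}}{\mmp\{Z_\alpha>t\}}.
\]
A multiplicative ratio is exactly what the crude asymptotic \eqref{eq:goldie} controls, since $\log\mmp\{Z_\alpha>t\}-\log\mmp\{Z_\alpha>ct\}\sim b^{-1}(c-1)t\log t\to\infty$. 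Let $t\to\infty$ and then $c\downarrow 1$ to get $\mmp\{Y>t\}=o(\mmp\{Z_\alpha>t\})$. Then \eqref{eq:asymp1} follows from \eqref{eq:dens}.
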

Although parts (a) and (b) of this proposition are by no means new, we did not see them in the literature in the present form. We briefly discuss the proof of Proposition \ref{prop:Dickman} in Section \ref{sect:prop}.

For $s\geq 0$, put $g(s):=\me [\eee^{sQ}]$ and then $$\psi_\alpha(s):=\log \me [\eee^{s(Z_\alpha-Q_1)}]=\alpha \int_0^s \frac{g(y)-1}{y}{\rm d}y.$$ The latter equality is a consequence of \eqref{eq:mgf}. The function $\psi_\alpha$ is strictly convex on $(0,\infty)$, hence $\psi^\prime_\alpha$ is strictly increasing on $(0,\infty)$. Also, $\psi^\prime_\alpha$ is continuous on $(0,\infty)$. We do not assume that $\me [Q]$ is finite. As a consequence, $\lim_{s\to 0+}\psi^\prime_\alpha(s)$ may be equal to $-\infty$. On the other hand, for any $\varepsilon>0$, $\lim_{s\to\infty}\eee^{\varepsilon s}\me [\eee^{-s(b-Q)}]=\infty$. Hence, $s^{-1}g(s)=s^{-1}\eee^{bs}\me [\eee^{-s(b-Q)}]\to\infty$ as $s\to\infty$. In particular, there exists $s^\ast\geq 0$ such that $\psi^\prime_\alpha(s)>0$ for all $s>s^\ast$. For each $t>\psi^\prime_\alpha(s^\ast)$, the equation $\psi^\prime_\alpha(s)
=t$ has then a unique positive solution that we denote by $s_\alpha(t)$. In particular, $$\alpha\frac{g(s_\alpha(t))-1}{s_\alpha(t)}=t.$$ Observe that $\lim_{t\to\infty}s_\alpha(t)=+\infty$.

Put $$p_\alpha(t):=\int_{(-\infty,\,b]}q_\alpha(t-y){\rm d}\mmp\{Q\leq y\},\quad t\in\mr.$$ Then $p_\alpha$ is a density, continuous on $(b,\infty)$, of the distribution of $Z_\alpha$.  
We note in passing that the distribution of $Z_\alpha$ is not necessarily infinitely divisible, let alone selfdecomposable.

As usual, $r_1(t)\sim r_2(t)$ as $t\to\infty$ will mean that $\lim_{t\to\infty}(r_1(t)/r_2(t))=1$. Here is our main result.
\begin{thm}\label{thm:main}
Under the standing assumptions, as $t\to\infty$,
$$p_\alpha(t)~\sim~\frac{1}{\alpha (2\pi b)^{1/2}}t^{1/2}s_\alpha(t) \exp\Big(-ts_\alpha(t)+\alpha \int_0^{s_\alpha(t)}\frac{g(y)-1}{y}{\rm d}y\Big)$$ and
\begin{equation}\label{eq:first}
\mmp\{Z_\alpha>t\}~\sim~\frac{1}{\alpha (2\pi b)^{1/2}}t^{1/2}\exp\Big(-ts_\alpha(t)+\alpha \int_0^{s_\alpha(t)}\frac{g(y)-1}{y}{\rm d}y\Big).
\end{equation}
\end{thm}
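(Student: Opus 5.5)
We prove the density asymptotic for $q_\alpha$ (the density of $Z_\alpha-Q_1$) by exponential change of measure, and then pass to $p_\alpha$ and to $\mmp\{Z_\alpha>t\}$ via Proposition \ref{prop:Dickman}(c).

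\textbf{Change of measure.} For $s>0$ define the tilted law
$$\mmp_s\{Z_\alpha-Q_1\in{\rm d}x\}=\eee^{sx-\psi_\alpha(s)}q_\alpha(x){\rm d}x.$$
Under it, $Z_\alpha-Q_1$ is still infinitely divisible (indeed selfdecomposable), with L\'evy measure $\eee^{sx}\nu^\ast_\alpha({\rm d}x)$. Its mean is $\psi_\alpha^\prime(s)$ and its variance is
$$\sigma^2(s):=\psi_\alpha^{\prime\prime}(s)=\alpha\int x\eee^{sx}\,{\rm d}\mmp\{Q\le x\}\cdot\frac{1}{s}\cdots,$$
more precisely $\psi^{\prime\prime}_\alpha(s)=\alpha\big(g^\prime(s)/s-(g(s)-1)/s^2\big)$. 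With $s=s_\alpha(t)$ we have
$$q_\alpha(t)=\eee^{-ts+\psi_\alpha(s)}\,f_s(t),$$
where $f_s$ is the tilted density evaluated at its mean.

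\textbf{Local CLT.} I will show that under $\mmp_s$ the variable $(Z_\alpha-Q_1-\psi^\prime_\alpha(s))/\sigma(s)$ converges to a standard normal, together with a local version $\sigma(s)f_s(\psi^\prime_\alpha(s))\to(2\pi)^{-1/2}$. I expect this to be the main obstacle. The approach is Fourier inversion of the tilted characteristic function $\exp\big(\psi_\alpha(s+{\rm i}u)-\psi_\alpha(s)-{\rm i}u\psi^\prime_\alpha(s)\big)$:
\begin{itemize}
\item Gaussian approximation on $|u|\le\delta\sigma(s)^{-1}\cdot(\text{something})$, via a third-cumulant bound $\psi^{\prime\prime\prime}_\alpha(s)/\sigma(s)^3\to0$;
\item exponential smallness on the rest, using $\mathrm{Re}\big(\psi_\alpha(s+{\rm i}u)-\psi_\alpha(s)\big)=-\alpha\int_0^\infty(1-\cos ux)x^{-1}\eee^{sx}\,{\rm d}\mmp\{Q>x\}\cdots$, the L\'evy--Khintchine real part, which is large.
\end{itemize}
Both steps need the mass of $\eee^{sx}{\rm d}\mmp\{Q\le x\}$ to concentrate near $b$. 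Since $g(s)\to\infty$ superpolynomially relative to $\eee^{(b-\varepsilon)s}$, the tilted law of $Q$ concentrates at $b$, which gives $g^\prime(s)\sim bg(s)$ and $g^{\prime\prime}(s)\sim b^2g(s)$. Hence
$$\psi^{\prime\prime}_\alpha(s)\sim\alpha bg(s)/s\sim bt\qquad\text{at } s=s_\alpha(t),$$
since $\alpha g(s)/s\sim t$. Similarly $\psi^{\prime\prime\prime}_\alpha\sim b^2t$, so the third-cumulant ratio is $O(t^{-1/2})\to0$. For the tail of the $u$-integral, the L\'evy mass $\alpha x^{-1}\eee^{sx}$ near $x=b$ is of total order $t$ and sits at lattice-free positions after averaging over $x\in(b-\varepsilon,b]$. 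Absolute continuity helps here, but a non-lattice-type argument is required: on a window of width $1/s$ below $b$ the measure $\eee^{sx}{\rm d}\mmp\{Q\le x\}$ still carries a positive fraction of the mass. Working this out carefully, including integrability of the tilted characteristic function, is the delicate point.

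\textbf{Assembling the density.} Combining the above gives
$$q_\alpha(t)\sim(2\pi bt)^{-1/2}\eee^{-ts+\psi_\alpha(s)}.$$
To get $p_\alpha$, write $p_\alpha(t)=\int q_\alpha(t-y)\,{\rm d}\mmp\{Q\le y\}$ and note that
$$q_\alpha(t-y)/q_\alpha(t)\approx\eee^{s_\alpha(t)y}$$
uniformly for bounded $y$. This uses that $s_\alpha(t)$ varies slowly, since $s_\alpha(t-y)-s_\alpha(t)=O(1/t)$ because $\psi^{\prime\prime}_\alpha\sim bt$. Hence $p_\alpha(t)\sim g(s_\alpha(t))q_\alpha(t)$, with the contribution from $y\to-\infty$ controlled by monotone decay of $q_\alpha$ on $(0,\infty)$, which holds by selfdecomposability. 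Since $g(s)\sim ts/\alpha$, this yields
$$p_\alpha(t)\sim\alpha^{-1}ts\,(2\pi bt)^{-1/2}\eee^{-ts+\psi_\alpha(s)},$$
which is the first claim of the theorem.

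\textbf{The tail.} Proposition \ref{prop:Dickman}(c), specifically \eqref{eq:asymp1}, gives $\mmp\{Z_\alpha>t\}\sim\alpha^{-1}tq_\alpha(t)$. Plugging in the asymptotic for $q_\alpha$ yields \eqref{eq:first}.
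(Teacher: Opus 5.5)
Your reduction steps are sound and form a legitimate variant of the paper's route. You tilt $Z_\alpha-Q_1$ rather than $Z_\alpha$, which buys an infinitely divisible tilted law with explicit characteristic function and puts $t$ exactly at its mean. You pass to $p_\alpha$ by convolution; this is correct, since $(xs_\alpha(x)-\psi_\alpha(s_\alpha(x)))^\prime=s_\alpha(x)$ and $s_\alpha^\prime(x)\sim 1/(bx)$. Selfdecomposability gives only unimodality, hence monotonicity of $q_\alpha$ on some $(x_\ast,\infty)$, but that is all you need. You then get the tail from \eqref{eq:asymp1}, exactly as in the paper's closing remark; the paper instead tilts $Z_\alpha$ and obtains the tail by L'H\^{o}pital's rule.

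The gap is that the local limit theorem for the tilted density at its mean, which is the whole analytic content of the theorem, is announced but not carried out. Both difficulties you flag are real. First, pointwise Fourier inversion is not available. If the tilted characteristic function were integrable, the tilted density would be bounded, yet for $Q=1$ a.s.\ and $\alpha<1$ one has $q_\alpha(x)=\kappa x^{\alpha-1}$ on $(0,1)$. Weak convergence plus continuity of $q_\alpha$ at $t$ gives no local statement. The missing idea is smoothing followed by de-smoothing through monotonicity. The paper convolves the tilted density with $v_{c_t}$, where $v(x)=(1-\cos x)/(\pi x^2)$ and $c_t=t^{1/2}(\log t)^{2+3\varepsilon}$, and inverts at $x_t=t\pm(\log t)^{-1-\varepsilon}$. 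It then uses eventual monotonicity of the untilted density together with $s_\alpha(t)(\log t)^{-1-\varepsilon}\to0$ to sandwich the tilted density at $t$ between the two smoothed values. You have the monotonicity ingredient in hand but use it only in the convolution step.

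Second, your large-frequency mechanism is aimed at the wrong measure and, as stated, is false. The tilted L\'{e}vy measure on $(0,\infty)$ is $\alpha x^{-1}\eee^{sx}\mmp\{Q>x\}{\rm d}x$; your real-part formula has ${\rm d}\mmp\{Q>x\}$ in place of $\mmp\{Q>x\}{\rm d}x$. This measure is absolutely continuous, so there is no lattice obstruction to overcome. Moreover, the claim that a window of width $1/s$ below $b$ carries a positive fraction of the tilted mass fails in general. If $\mmp\{b-Q\le y\}=\eee^{-1/y}$ for $y>0$, the tilted mass sits at distance of order $s^{-1/2}$ below $b$, and the window's share tends to $0$.

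What is actually needed is a lower bound for $\int_0^\infty(1-\cos ux)x^{-1}\eee^{sx}\mmp\{Q>x\}{\rm d}x$ at $s=s_\alpha(t)$. It must be uniform in $|u|$ from the edge of the Gaussian range up to the bandwidth $c_t$, and grow faster than $\log t$ so as to beat the polynomially long integration range created by the smoothing. Your window idea does work \emph{conditionally on $Q$}. For fixed $Q^+=q\ge b/2$, the measure $\eee^{sy}\1_{[0,q]}(y){\rm d}y$ has an exponential profile at its endpoint, which gives $\int_0^q\eee^{sy}(1-\cos uy){\rm d}y\gtrsim\min(1,u^2/s^2)\,\eee^{sq}/s$. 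Averaging over $Q$ yields a bound of order $t/(\log t)^2$, uniformly in $|u|\ge\pi/b$; this is precisely what the paper extracts from the explicit integral $J_{a,c}$. The loss of the factor $s^{-2}$ is genuine: for $Q=b$ a.s.\ and $u=2\pi/b$ the exponent is only of order $t/(\log t)^2$. So the ``order $t$'' L\'{e}vy mass near $b$ does not by itself give decay like $\eee^{-ct}$.
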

\begin{rem}\label{rem:formulae}
Let $X$ be a real-valued random variable with $\mmp\{X>y\}>0$ for all $y>0$ and $\Psi(s):=\log \me [\eee^{sX}]<\infty$ for all $s\geq 0$. Arguing as at the beginning of Section \ref{sect:proofs} it can be shown that
\begin{equation}\label{eq:repr2}
\mmp\{X>t\}=\exp(-ts^\ast(t)+\Psi(s^\ast(t)))J(t)
\end{equation}
for an appropriate function $J$, where, for each $t$ large enough, $s^\ast(t)$ is the unique positive solution to $\Psi^\prime(s)=t$. Put $I(t):=\sup_{s\geq 0}\,(st-\Psi(s))$ for $t>0$. The function $I$ is the Legendre transform of $\Psi$. 
Since $I(t)=ts^\ast(t)-\Psi(s^\ast(t))$ for large $t$, the first factor on the right-hand side of \eqref{eq:repr2} is actually equal to $\exp(-I(t))$ for large $t$.

The main contribution of Theorem \ref{thm:main} is the following universal asymptotic behavior of the second factor on the right-hand side of \eqref{eq:repr2} when $X=Z_\alpha$: $J(t)\sim (s_\alpha(t))^{-1}(2\pi b t)^{-1/2}$ as $t\to\infty$. The first factor in \eqref{eq:repr2} is the standard exponential large-deviation term. In the present setting, it satisfies
\begin{equation}\label{eq:it}
\eee^{-I(t)}~\sim~\alpha^{-1}ts_\alpha(t)\exp\Big(-ts_\alpha(t)+\alpha\int_0^{s_\alpha(t)}\frac{g(y)-1}{y}{\rm d}y\Big),\quad t\to\infty.
\end{equation}
Its form differs slightly from $\exp\big(-ts^\ast(t)+\log \me [\eee^{s^\ast(t)Z_\alpha}]\big)$ because $s_\alpha(t)$ is defined in terms of the moment generating function of $Z_\alpha-Q_1$, whereas $s^\ast(t)$ is defined in terms of that of $Z_\alpha$. The factor $\alpha^{-1}ts_\alpha(t)$ in \eqref{eq:it} comes from $$\me [\eee^{sZ_\alpha}]=g(s)\me [\eee^{s(Z_\alpha-Q_1)}]\quad\text{and}\quad g(s_\alpha(t))=1+\alpha^{-1}ts_\alpha(t)~\sim~\alpha^{-1}ts_\alpha(t).$$ It is straightforward to verify that the two forms of the exponential large-deviation term are asymptotically equivalent. Multiplying \eqref{eq:it} by the above asymptotic formula for $J(t)$, we recover \eqref{eq:first}. More explicit asymptotic formulae for the right-hand side of \eqref{eq:it} are given in Section \ref{sect:examples} for both {\it general} and {\it particular} distributions of $Q$.
%
%
%
%
%
%
\end{rem}

\section{A short survey of relevant works}

The interest in Dickman and generalized Dickman distributions extends well beyond perpetuity theory. Their importance originates in probabilistic number theory, where the Dickman--de Bruijn function $\varrho$ describes the asymptotic density of smooth integers. More precisely, for every fixed $u\geq 1$,
$$\lim_{x\to\infty} x^{-1}\#\{n\leq x:~\text{every prime divisor of}~ n~\text{is at most}~x^{1/u}\}=\varrho(u).$$ Precise estimates for such smooth-number counts play a central role in computational number theory, particularly in the analysis of integer-factorization algorithms, see \cite{Granville:2008} and the references therein. The same function governs the limit behavior of the largest prime factor of a uniformly chosen integer, the longest cycle of a random permutation, and the largest degree of an irreducible factor of a random polynomial over a finite field. These connections fit naturally within the general theory of logarithmic combinatorial structures and are closely related to Poisson--Dirichlet distributions, see \cite{Arratia+Barbour+Tavare:2003}.

The probabilistic Dickman distribution itself also appears in the analysis of algorithms. It arises, for instance, as a limit distribution for the number of comparisons performed by the Quickselect algorithm \cite{Hwang+Tsai:2002}. More generally, Dickman approximation has become a useful testing ground for Stein's method and quantitative distributional approximation \cite{Bhattacharjee+Schulte:2025}, as well as for unusual domain-of-attraction phenomena \cite{Pinsky:2018}. At the process level, the Dickman subordinator arises in renewal theory and marginally relevant disordered systems, including pinning and directed-polymer models \cite{Caravenna+Sun+Zygouras:2019}, while stationary processes with Dickman-type marginals and either short- or long-range dependence have recently been constructed in \cite{Grahovac etal:2025}. Against this background, the present paper shows that the precise tail asymptotics known for the classical Dickman law persist for a substantially broader class of Dickman-like perpetuities, in which the deterministic additive term is replaced by a bounded random variable $Q$. Moreover, this extension is obtained by a probabilistic exponential-change-of-measure argument rather than by the complex-analytic methods traditionally used in the classical case.

We proceed by recalling classical results. Assume that $Q=1$ a.s. According to Proposition \ref{prop:Dickman}, the distribution of $Z_\alpha-1$ admits a density $q_\alpha$ that is continuous on $(0,\infty)$. By using a complex-analytic approach based on the saddle-point method, it was shown in formula (1.6) of \cite{Bruijn:1951} that\footnote{Let $\gamma$ denote the Euler-Mascheroni constant. Actually, de Bruijn worked with the function $t\mapsto \eee^\gamma q_1(t)$ which justifies the appearance of the factor $\eee^\gamma$ in his original formula.}
$$q_1(t)~\sim~ \frac{1}{(2\pi t)^{1/2}}\exp\Big(-\int_0^{s_1(t)}\frac{y\eee^{y}-\eee^y+1}{y}{\rm d}y\Big),\quad t\to\infty,$$ where, for each $t>1$, $s_1(t)$ is the unique positive solution to $s^{-1}(\eee^{s}-1)=t$.
As a consequence, the following asymptotic expansion with $\alpha=1$ was obtained in formula (1.8) of \cite{Bruijn:1951}: as $t\to\infty$,
\begin{equation}\label{eq:verv}
q_\alpha(t)=\exp\Big(-t\Big(\log t+\log\log t-(1+\log \alpha)\Big(1+\frac{1}{\log t}\Big)+\frac{\log\log t}{\log t}+O\Big(\Big(\frac{\log\log t}{\log t}\Big)^2\Big)\Big)\Big).
\end{equation}
To derive the preceding formula with $\alpha>0$ from the formula of de Bruijn, a clever real-analytic argument was worked out in the proof of Lemma 4.7.9 on p.~84 in \cite{Vervaat:1972}.

A streamlined (but still complex-analytic) argument given in the proof of Theorem 8 on p.~374 of \cite{Tenenbaum:1995} shows that 
$$q_1(t)=\frac{1}{(2\pi t)^{1/2}}\exp\Big(-\int_0^{s_1(t)}\frac{y\eee^{y}-\eee^y+1}{y}{\rm d}y\Big)\Big(1+O\Big(\frac{1}{t}\Big)\Big),\quad t\to\infty.$$

Recall for later needs that a positive measurable function $h$ is called regularly varying at $0$ (at $\infty$) of index $\beta\in\mr$ if $\lim_{t\to 0+}(h(\lambda t)/h(t))=\lambda^\beta$ ($\lim_{t\to\infty}(h(\lambda t)/h(t))=\lambda^\beta$) for each $\lambda>0$. A function is called slowly varying at $0$ (at $\infty$) if it is regularly varying at $0$ (at $\infty$) of index $0$.

As has already been stated in the introduction, the present work was inspired by Theorem 3.1(a) in \cite{Goldie+Grubel:1996}. That paper gave impetus to further investigations of logarithmic asymptotics of superexponential  distribution tails of perpetuities \cite{Hitczenko:2010, Hitczenko+Wesolowski:2009, Kolodziejek:2017, Kolodziejek:2018} and logarithmic asymptotics of superexponential distribution tails of fixed points of inhomogeneous smoothing transforms \cite{Alsmeyer+Dyszewski:2017}.

Specifically, suppose that $M$ and $Q$ are nonnegative and independent, and put $$H(t):=-t\log\mmp\{M>1-1/t\},\quad t>1.$$ Assume that $H$ is regularly varying at $\infty$ of index $r>1$, 
and that $b=\sup\{x:\mmp\{Q\geq x\}>0\}\in (0,\infty)$. Then Theorem 3.2 in \cite{Kolodziejek:2018} states that $$-\log \mmp\{Z>t\}~\sim~\Big(\frac{r}{r-1}\Big)^{r-1}H\Big(\frac{t}{b}\Big),\quad t\to\infty.$$ This result does not apply to the Dickman-like perpetuities $Z_\alpha$, since the corresponding function $H$ is regularly varying of index $1$. However, if $Q=b$ a.s.\ and $H$ is ultimately strictly convex and regularly varying of index $1$, then Theorem 4.1 in \cite{Kolodziejek:2017} yields $$-\log \mmp\{Z>t\}~\sim~H\Big(\frac{t}{b}\Big),\quad t\to\infty.$$ This result does apply to $Z_\alpha$, because in this case $H(t)=-t\log (1-(1-1/t)^\alpha)$ for $t>1$, which is ultimately strictly convex and regularly varying at $\infty$ of index $1$.

Recall that, under the standing assumptions, the variable $Z_\alpha-Q_1$ has an infinitely divisible distribution. By Proposition \ref{prop:Dickman}, the supremum of the support of its L\'{e}vy measure $\nu^\ast_\alpha$ is equal to $b$. Hence, by a general result given in Theorem 2(ii) of \cite{Ohkubo:1979}, $$-\log \mmp\{Z_\alpha-Q_1>t\}~\sim~ b^{-1}t\log t,\quad t\to\infty.$$

To close the section, still assuming that $M_\alpha$ has the beta distribution with parameters $\alpha>0$ and $1$, we discuss two other scenarios giving rise to tail behavior different from that in Theorem \ref{thm:main}.

\noindent {\sc Regularly varying tails}. Assume that $\me [\log^+|Q|]<\infty$ and $\mmp\{Q>t\}\sim t^{-\beta}L(t)$ as $t\to\infty$ for some $\beta>0$ and some $L$ slowly varying at $\infty$. The variables $M_\alpha$ and $Q$ may be dependent. According to the Grincevi\v{c}ius-Grey theorem (Theorem 1 in \cite{Grincevicius:1975} and Theorem 1 in \cite{Grey:1994}),
\begin{equation}\label{eq:GrinGrey}
\mmp\{Z_\alpha>t\}~\sim~\frac{1}{1-\me [M_\alpha^\beta]}\mmp\{Q>t\}~\sim~\frac{\alpha+\beta}{\beta}t^{-\beta}L(t),\quad t\to\infty.
\end{equation}
Assume now that $M_\alpha$ and $Q$ are independent. Then the latter limit relation can alternatively be obtained using the fact that the distribution of $Z_\alpha-Q_1$ is infinitely divisible with the L\'{e}vy measure $\nu^\ast_\alpha$ given by $\nu^\ast_\alpha ((x,\infty))=\alpha \int_x^\infty y^{-1}\mmp\{Q>y\}{\rm d}y$ for $x>0$ (this follows along the lines of the proof of Proposition \ref{prop:Dickman}; the assumption that $Q$ is a.s.\ bounded from above appearing in the proposition does not affect the conclusion regarding infinite divisibility, nor the form of the L\'{e}vy measure). By a general result (for instance, Proposition 0 on p.~469 in \cite{Embrechts+Goldie:1981}) relating the tail of an infinitely divisible distribution to the tail of its L\'{e}vy measure, $$\mmp\{Z_\alpha-Q_1>t\}~\sim~\nu^\ast_\alpha ((t,\infty))=\alpha \int_t^\infty y^{-1}\mmp\{Q>y\}{\rm d}y~\sim~ \frac{\alpha}{\beta}t^{-\beta}L(t),\quad t\to\infty.$$ Since the variables $Q_1$ and $Z_\alpha-Q_1$ are independent, and their distribution tails are regularly varying at $\infty$ of the same index, we arrive at \eqref{eq:GrinGrey}.

\noindent {\sc Gamma-like tails}. Assume that $M_\alpha$ and $Q$  are independent, $\me [\log^+ |Q|]<\infty$ and $$
\mmp\{Q>t\}= A\eee^{-ct}+ r(t),\quad t\geq 0$$
for some $A,c> 0$ and a real-valued function $r$ satisfying several technical conditions specified in Theorem 2 of \cite{Buraczewski etal:2018}.
Then, by that theorem,
$$\mmp\{Z_\alpha>t\}~\sim~ Bt^{\alpha A}\eee^{-ct},\quad t\to\infty$$ for a finite constant $B$ explicitly defined in \cite{Buraczewski etal:2018}.

To gain some intuition for this result, it may be helpful to keep in mind the known fact that if $Q$ has the exponential distribution with mean $1/c$, then $Z_\alpha$ has a gamma distribution with parameters $\alpha+1$ and $c$, that is, $\mmp\{Z_\alpha\in {\rm d}x\}= \frac{c^{\alpha+1}}{\Gamma(\alpha+1)}x^\alpha\eee^{-c x}\1_{(0,\infty)}(x){\rm d}x$, where $\Gamma$ is the Euler gamma function. In particular, $$\mmp\{Z_\alpha>t\}~\sim~\frac{c^\alpha}{\Gamma(\alpha+1)}t^\alpha \eee^{-ct},\quad t\to\infty.$$

Let $(X(t))_{t\geq 0}$ be a subordinator, that is, a L\'{e}vy process with nondecreasing paths. Another special class of perpetuities is defined by the integrals $\hat Z:=\int_0^\infty \eee^{-X(u)}{\rm d}u$. It is known that the distribution of $\hat Z$ has a density that we denote by $\hat p$. It follows from the general theory of perpetuities (for instance, Theorems 2.1.7 and 2.1.8 in \cite{Iksanov:2016}) that $\me [\eee^{s\hat Z}]<\infty$ for some $s>0$. In other words, the right distribution tail of $\hat Z$ decays exponentially or superexponentially fast. A precise asymptotic behavior of $\mmp\{\hat Z>t\}$ and $\hat p(t)$ as $t\to\infty$ was derived in \cite{Haas:2025} under the assumption that $\limsup_{s\to\infty}s(-\log \me [\eee^{-sX(1)}])^\prime<1$. The proof is based on a similarity of a functional equation satisfied by $t\mapsto (-\log \mmp\{\hat Z>t\})^\prime$ and a functional equation satisfied by a certain functional of the Laplace exponent of $X(1)$. A similar complex-analytic saddle-point argument was used in \cite{Minchev+Savov:2023} to obtain analogous results, together with precise asymptotics for the derivatives of $\hat p$.

\section{Examples}\label{sect:examples}

First we discuss briefly the asymptotic behavior of $s_\alpha$ and the exponent arising in Theorem \ref{thm:main} for {\it general} distributions of $Q$.

\noindent {\sc Example 1}. Under the standing assumptions,
\begin{equation}\label{eq:asymp1021}
s_\alpha(t)~\sim~b^{-1}\log t \quad \text{and}\quad ts_\alpha(t)-\alpha \int_0^{s_\alpha(t)}\frac{g(y)-1}{y}{\rm d}y~\sim~b^{-1}t\log t,
\quad t\to\infty.
\end{equation}


\noindent {\sc Example 2}. In addition to the standing assumptions, assume that either $\mmp\{b-Q\leq x\}\sim x^\theta \ell(x)$ as $x\to 0+$ for some $\theta\geq 0$ and some $\ell$ slowly varying at $0$ or $\mmp\{Q=b\}>0$ (in which case we put $\theta=0$ in formula \eqref{eq:asymp10211} below). Then
\begin{multline}\label{eq:asymp10211}
s_\alpha(t)~=~b^{-1}(\log t+(1+\theta)\log\log t)+o(\log\log t)\quad\text{and}\\ ts_\alpha(t)-\alpha \int_0^{s_\alpha(t)}\frac{g(y)-1}{y}{\rm d}y~=~b^{-1}t(\log t+(1+\theta)\log\log t)+o(t\log\log t),
\quad t\to\infty.
\end{multline}
We shall prove formulae \eqref{eq:asymp1021} and \eqref{eq:asymp10211} at the end of Section \ref{sect:aux}.

Next, we provide asymptotic formulae for $s_\alpha(t)$, $p_\alpha(t)$ and $\mmp\{Z_\alpha>t\}$ as $t\to\infty$ in terms of elementary functions. 

\noindent {\sc Example 3}. Let $p\in (0,1]$, $\mmp\{Q=b\}=p$ and $\mmp\{Q\leq 0\}=1-p$. Then $s_\alpha(t)$ is the unique positive solution to $\alpha s^{-1}(p\eee^{bs}+\me [\eee^{sQ}\1_{\{Q\leq 0\}}]-1)=t$.
\begin{lemma}
Under the assumptions of Example 3, $s_\alpha(t)$ admits a representation
\begin{multline}
s_\alpha(t)=b^{-1}\Big(\log (\delta t)+\log\log (\delta t)\\+\sum_{n\geq 1}\frac{(-1)^{n+1}}{(\log (\delta t) )^n}\sum_{m=1}^n (-1)^{n+m}\genfrac{[}{]}{0pt}{}{n}{n-m+1}\frac{(\log\log (\delta t))^m}{m!}\Big)+O((t\log t)^{-1}),\label{eq:salpha}
\end{multline}
where the series converges for large $t$, $\delta:=(\alpha b p)^{-1}$, and $(\genfrac{[}{]}{0pt}{}{i}{j})_{i,j\geq 1}$ are the unsigned Stirling numbers of the first kind.
\end{lemma}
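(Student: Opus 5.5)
Set $u:=bs$ and rewrite the equation defining $s_\alpha(t)$ as $p\eee^{u}+\me[\eee^{sQ}\1_{\{Q\leq 0\}}]-1=\alpha^{-1}ts$. The middle term lies in $[0,1-p]$, so it and the $-1$ contribute a bounded quantity $c(s)\in[-1,0]$. Dividing by $p$ and recalling $\delta=(\alpha bp)^{-1}$, we get
$$\eee^{u}=\delta t u+\varepsilon(t),\qquad |\varepsilon(t)|\leq p^{-1}.$$
The plan is first to solve the unperturbed equation $\eee^{v}=\delta t v$ exactly via the Lambert $W$ function, and then to control the perturbation caused by $\varepsilon$.

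\emph{Unperturbed equation.} Put $x:=\delta t$. The equation $\eee^{v}=xv$ is equivalent to $-v\eee^{-v}=-1/x$. Its large root is therefore $v=-W_{-1}(-1/x)$, where $W_{-1}$ is the lower real branch of Lambert's function. We then invoke the classical convergent expansion of de Bruijn and Comtet, as made precise by Corless, Gonnet, Hare, Jeffrey and Knuth (1996). With $L_1=\log x$ and $L_2=\log\log x$ it reads
$$v=L_1+L_2+\sum_{n\geq 1}\sum_{m=1}^n c_{nm}\frac{L_2^m}{L_1^{n}},$$
where $c_{nm}=(-1)^{n+m}\genfrac{[}{]}{0pt}{}{n}{n-m+1}/m!$ up to the global sign $(-1)^{n+1}$. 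One has to check that the sign conventions match exactly the displayed formula. The same reference shows that the series converges absolutely once $L_2/L_1$ is small enough, i.e.\ for large $x$. This explains the claim that the series converges for large $t$.

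If we prefer not to cite the reference, we could derive the expansion directly. Write $v=L_1+L_2+w$, so that $w$ solves $w=\log(1+(L_2+w)/L_1)$. Lagrange inversion in the two small parameters $1/L_1$ and $L_2/L_1$ then produces the Stirling numbers, and analyticity gives convergence. Citing is cleaner, and that is what we will do.

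\emph{Perturbation estimate.} Put $F(u):=\eee^{u}-xu$. Then $F(u)=\varepsilon$ and $F(v)=0$. Near $v$ we have $F'(u)=\eee^{u}-x\geq xv-x\sim x\log x$. The lower bound $\eee^u\ge xv$ holds only for $u\ge v$, but $u-v\to0$ is easy a priori, so $F'(u)\ge x(\log x)(1+o(1))$ on the segment between $u$ and $v$, which is enough. By the mean value theorem,
$$|u-v|\leq \frac{|\varepsilon|}{\min F'}=O\big((x\log x)^{-1}\big)=O\big((t\log t)^{-1}\big).$$
To justify the a priori localisation we use Example 1, which gives $u\sim\log t$. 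Monotonicity of $F$ on $(\log x,\infty)$ then places $u$ within $O(1)$ of $v$, and the crude bound can be refined as above. Dividing by $b$ gives \eqref{eq:salpha}.

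The main obstacle is bookkeeping rather than analysis. We must match the Stirling-number indexing and the signs in \eqref{eq:salpha} against the Lambert-$W$ asymptotic series. We must also verify convergence of the double series for large $t$, the regime being $|L_2/L_1|$ small. A check of the first coefficients helps: for $n=1$ the formula gives $+L_2/L_1$, and for $n=2$ it gives $(L_2^2/2-L_2)/L_1^2$ up to sign. These agree with the known expansion $L_1+L_2+L_2/L_1+L_2(L_2-2)/(2L_1^2)+\cdots$, which confirms the convention.
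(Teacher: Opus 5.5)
Your proposal is correct and follows essentially the paper's argument. The paper likewise cites Theorem 1 of Jeffrey et al.\ (1995) for the root $\rho$ of $\eee^{v}/v=x$, and then bounds $\tilde\rho-\rho$ by the mean value theorem with a derivative of order $x\log x$. It also uses $h_b\in[0,1]$, which is exactly the positivity $\varepsilon\geq 1$ of your perturbation, to get $\tilde\rho\geq\rho$ at once; this makes your a priori localisation through Example 1 unnecessary.

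One correction concerns your sign check. For $n=2$ the displayed formula gives $+(L_2-L_2^2/2)/L_1^2=L_2(2-L_2)/(2L_1^2)$, and this is the correct term of $-W_{-1}(-1/x)$: iterate $v=L_1+\log v$, or compare the paper's own expansion in Example 3. The ``known expansion'' $L_2(L_2-2)/(2L_1^2)$ that you quote has the opposite sign, which is the sign pattern of $W_0$. So your check as written confirms nothing, even though the lemma's formula is right.
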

\begin{proof}
Put $U(s):=s^{-1}\eee^s$ for $s>0$. For $t>\eee$, denote by $\rho(t)$ the unique solution in $(1,\infty)$ of the equation $U(s)=t$. Theorem 1 in \cite{Jeffrey etal:1995} states that $$\rho(t)=\log t+\log\log t+\sum_{n\geq 1}\frac{(-1)^{n+1}}{(\log t)^n}\sum_{m=1}^n (-1)^{n+m} \genfrac{[}{]}{0pt}{}{n}{n-m+1}\frac{(\log\log t)^m}{m!},$$ where the series converges for all sufficiently large $t$.

Put $h_b(s):=\me [\eee^{sQ/b}\1_{\{Q\leq 0\}}]$ for $s\geq 0$. For all sufficiently large $t$, let $\tilde \rho(t)$ denote the unique solution in $(1,\infty)$ of $s^{-1}(\eee^s-p^{-1}(1-h_b(s)))=t$. Such a solution exists and is unique because the function on the left-hand side is continuous on $[1,\infty)$, ultimately strictly increasing, and diverges to $\infty$. 
Then $s_\alpha(t)=b^{-1}\tilde \rho (t/(\alpha b p))$. We claim that
\begin{equation}\label{eq:inter125}
\tilde \rho(t)-\rho(t)=O((t\log t)^{-1}),\quad t\to\infty,
\end{equation}
which, together with the preceding expansion of $\rho$, proves the lemma.

In view of $h_b(s)\in [0,1]$, we conclude that $$U(\tilde \rho(t))=t+\frac{p^{-1}(1-h_b(\tilde \rho(t)))}{\tilde \rho(t)}\geq t=U(\rho(t)).$$ Since $U$ is strictly increasing on $(1,\infty)$, we have $\tilde \rho(t)\geq \rho(t)$ for all sufficiently large $t$. Using $\rho(t)\sim \log t$ as $t\to\infty$, we infer $$0\leq \frac{p^{-1}(1-h_b(\tilde \rho(t)))}{\tilde \rho(t)}\leq \frac{1}{p\rho(t)}=O\Big(\frac{1}{\log t}\Big),\quad t\to\infty,$$ whence $$U(\tilde \rho(t))-U(\rho(t))=O((\log t)^{-1}),\quad t\to\infty.$$ Moreover, $U^\prime$ is strictly increasing, because $U^{\prime\prime}(s)=s^{-3}\eee^s ((s-1)^2+1)>0$ for $s>0$. By the mean value theorem for differentiable functions, $$U(\tilde \rho(t))-U(\rho(t))\geq U^\prime(\rho(t))(\tilde \rho(t)-\rho(t)).$$ Finally, $$U^\prime(\rho(t))=\frac{\eee^{\rho(t)}}{\rho(t)}\Big(1-\frac{1}{\rho(t)}\Big)=t\Big(1-\frac{1}{\rho(t)}\Big)~\sim~ t,\quad t\to\infty,$$ and \eqref{eq:inter125} follows.

\end{proof}

In principle, formula \eqref{eq:salpha} can be used to obtain an asymptotic expansion of $s_\alpha(t)$ up to the term $O((\log\log t/\log t)^k)$ for any $k\in\mn$. As an example, we give the expansion for $k=5$ (below $\log\log (\delta t)$ always means $(\log\log (\delta t))$):
\begin{multline*}
s_\alpha(t)=b^{-1}\Big(\log (\delta t)+\log\log (\delta t)+\frac{\log\log (\delta t)}{\log (\delta t)}+\frac{\log\log (\delta t) (2-\log\log(\delta t))}{2(\log (\delta t))^2}\\+\frac{\log\log (\delta t) (6-9\log\log(\delta t)+2(\log\log (\delta t))^2)}{6(\log (\delta t))^3}\\+\frac{\log\log (\delta t) (12-36\log\log(\delta t)+22(\log\log (\delta t))^2-3(\log\log (\delta t))^3)}{12(\log (\delta t))^4}+O\Big(\Big(\frac{\log\log t}{\log t}\Big)^5\Big)\Big).
\end{multline*}

Further, for any $k\in\mn$,
\begin{multline*}
\psi_\alpha(s)=\alpha \int_0^s \frac{g(y)-1}{y}{\rm d}y\\=\alpha \Big(p\int_1^s \frac{\eee^{by}}{y}{\rm d}y-\int_1^s \frac{1-\me [\eee^{yQ}\1_{\{Q\leq 0\}}]}{y}{\rm d}y+ \int_0^1 \frac{g(y)-1}{y}{\rm d}y\Big)\\=\alpha p\int_1^s \frac{\eee^{by}}{y}{\rm d}y+O(\log s)=\alpha p \eee^{bs} \sum_{i=1}^k\frac{(i-1)!}{(bs)^i}+O\Big(\frac{\eee^{bs}}{s^{k+1}}\Big),\quad s\to\infty,
\end{multline*}
where the last equality is obtained by repeated integration by parts. By the choice of $s_\alpha$, $$\alpha p \frac{\eee^{bs_\alpha(t)}}{(s_\alpha(t))^i}=\frac{t}{(s_\alpha(t))^{i-1}}+\alpha \frac{1-\me [\eee^{s_\alpha(t)Q}\1_{\{Q\leq 0\}}]}{(s_\alpha(t))^i}=\frac{t}{(s_\alpha(t))^{i-1}}+o(1)$$ as $t\to\infty$, whence $$\psi_\alpha(s_\alpha(t))=t\sum_{i=1}^k \frac{(i-1)!}{b^i(s_\alpha (t))^{i-1}}+O\Big(\frac{t}{(s_\alpha(t))^k}\Big),\quad t\to\infty.$$ Invoking Theorem \ref{thm:main} we conclude that, for any $k\in\mn$, $$p_\alpha(t)=\exp\Big(-t\Big(s_\alpha(t)-\sum_{i=1}^k\frac{(i-1)!}{b^i (s_\alpha(t))^{i-1}}+O\Big(\frac{1}{(s_
\alpha(t))^k}\Big)\Big)\Big),\quad t\to\infty$$ and that $\mmp\{Z_\alpha>t\}$ admits the same asymptotic expansion. The right-hand side can be expressed in terms of elementary functions. However, this task becomes formidable as $k$ grows. For instance, for $k=4$,
\begin{multline*}
p_\alpha(t)=\exp\Big(-b^{-1}t\Big(\log (\delta t)+\log\log (\delta t)-1 
+\frac{\log\log (\delta t)-1}{\log (\delta t)}
\\-\frac{(\log\log (\delta t)-2)^2}{2(\log (\delta t))^2}+\frac{2(\log\log (\delta t))^3-15(\log\log(\delta t))^2+36\log\log(\delta t)-36}{6(\log (\delta t))^3}\\ 
+O\Big(\Big(\frac{\log\log t}{\log t}\Big)^4\Big)\Big)\Big),\quad t\to\infty.
\end{multline*}
Let $b=p=1$, so that $\delta=\alpha^{-1}$. Then the expansion on the right-hand side, truncated with remainder $O((\log\log t/\log t)^2)$, coincides with the expansion in formula \eqref{eq:verv}, as it must.

\noindent {\sc Example 4}. Assume that $Q=b-\eta$, where $\eta$ is a random variable having the gamma distribution with positive parameters $\theta$ and $1/\lambda$. Then $\me [\eee^{sQ}]=\eee^{bs}(1+\lambda s)^{-\theta}$ for $s\geq 0$. In this case, $s_\alpha$ satisfies $$s_\alpha(t)=b^{-1}\big(\log (t/\alpha)+\log (s_\alpha(t)+\alpha/t)+\theta \log(1+\lambda s_\alpha(t))\big).$$ Iterating this once we obtain
\begin{multline*}
s_\alpha(t)=b^{-1}\Big(\log (t/\alpha)+(1+\theta)\log\log (t/\alpha)+\theta\log \lambda+(1+\theta)\log (1/b)+(1+\theta)^2 \frac{\log\log (t/\alpha)}{\log (t/\alpha)}\\+\Big(\frac{\theta b}{\lambda}+(1+\theta)\theta\log\lambda+(1+\theta)^2\log(1/b)\Big)\frac{1}{\log (t/\alpha)}\Big)+O\Big(\Big(\frac{\log\log t}{\log t}\Big)^2\Big),\quad t\to\infty.
\end{multline*}
As in Example 3,
\begin{multline*}
\psi_\alpha(s)=\alpha\int_0^s \frac{g(y)-1}{y}{\rm d}y=\alpha \int_1^s\frac{\eee^{by}}{y(1+\lambda y)^\theta}{\rm d}y+O(\log s)\\=\alpha \eee^{bs}\Big(\frac{1}{bs}\frac{1}{(1+\lambda s)^\theta}+\frac{1}{(bs)^2}\frac{1}{(1+\lambda s)^\theta}+\frac{\theta\lambda}{b^2s}\frac{1}{(1+\lambda s)^{\theta+1}}\Big)+O\Big(\frac{\eee^{bs}}{s^{\theta+3}}\Big)
\end{multline*}
as $s\to\infty$. Further, 
as $t\to\infty$, $$\alpha \frac{\eee^{bs_\alpha(t)}}{s_\alpha(t)(1+\lambda s_\alpha (t))^\theta}=t+o(1),\quad \alpha \frac{\eee^{bs_\alpha(t)}}{s_\alpha(t)(1+\lambda s_\alpha (t))^{\theta+1}}=\frac{t}{\lambda s_\alpha(t)}+O\Big(\frac{t}{(s_\alpha(t))^2}\Big)$$ and $$\alpha \frac{\eee^{bs_\alpha(t)}}{(s_\alpha(t))^2(1+\lambda s_\alpha (t))^\theta}=\frac{t}{s_\alpha(t)}+o(1),$$ whence $$\psi_\alpha(s_\alpha(t))=b^{-1}t\Big(1+\frac{1+\theta}{bs_\alpha(t)}\Big)+O\Big(\frac{t}{(s_\alpha(t))^2}\Big)=b^{-1}t\Big(1+\frac{1+\theta}{\log (t/\alpha)}\Big)+O\Big(\frac{t \log\log t}{(\log t)^2}\Big).$$ Summarizing,
\begin{multline*}
p_\alpha(t)=\exp\Big(-b^{-1}t\Big(
\log (t/\alpha)+(1+\theta)\log\log (t/\alpha)+\theta\log\lambda+(1+\theta)\log (1/b)-1\\+(1+\theta)^2 \frac{\log\log (t/\alpha)}{\log (t/\alpha)}+\Big(\frac{\theta b}{\lambda}+(1+\theta)\theta\log \lambda+(1+\theta)^2\log (1/b)-1-\theta\Big)\frac{1}{\log (t/\alpha)}\\+O\Big(\Big(\frac{\log\log t}{\log t}\Big)^2\Big)\Big)\Big),\quad t\to\infty,
\end{multline*}
and $\mmp\{Z_\alpha>t\}$ admits the same asymptotic expansion.

\section{Auxiliary results}\label{sect:aux}

We start by formulating a part of Lemma 7.1 in \cite{Buraczewski+Iksanov+Marynych:2025}.
\begin{lemma}\label{lem:lst1}
Let $V : \mr \to [0,\infty)$ be a right-continuous and nondecreasing function vanishing on $(-\infty, 0)$. Assume that $\int_{[0,\,\infty)}\eee^{-s^\ast x}{\rm d}V(x)<\infty$ for some $s^\ast>0$. Then $$\lim_{s\to\infty}\frac{\int_{[0,\,\infty)}\eee^{-sx}x{\rm d}V(x)}{\int_{[0,\,\infty)}\eee^{-sx}{\rm d}V(x)}=x_0 := \inf\{x > 0 : V (x) > 0\}.$$
\end{lemma}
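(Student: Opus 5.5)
We need to prove that the ratio $\int e^{-sx}x\,dV/\int e^{-sx}dV$ tends to $x_0$ as $s\to\infty$. The plan is a two-sided bound: a trivial lower bound, and an upper bound obtained by splitting the integral near $x_0$ and showing the far part is exponentially negligible.

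Fix $s>s^\ast$. Both integrals are then finite, since $x e^{-sx}\le C e^{-s^\ast x}$ for $x\ge0$. The denominator is positive: by right-continuity, $V(x_0+\varepsilon)>0$ for every $\varepsilon>0$, so $dV$ charges $[x_0,x_0+\varepsilon]$. The measure $dV$ has no mass on $[0,x_0)$; this is clear if $x_0>0$, and trivial if $x_0=0$. Hence the normalized measure $\mu_s(dx):=e^{-sx}dV(x)/\int e^{-sy}dV(y)$ is a probability measure supported on $[x_0,\infty)$. Its mean is exactly the ratio in question, so the ratio is always $\ge x_0$. The lower bound is therefore immediate.

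For the upper bound, fix $\varepsilon>0$ and write the ratio as $\int x\,\mu_s(dx)\le x_0+\varepsilon+\int_{(x_0+\varepsilon,\infty)}x\,\mu_s(dx)$. It remains to show that the last term tends to $0$. Bound the denominator from below by
$$\int_{[x_0,x_0+\varepsilon/2]}e^{-sx}dV(x)\ge e^{-s(x_0+\varepsilon/2)}\,c_\varepsilon,\qquad c_\varepsilon:=V(x_0+\varepsilon/2)-V(x_0-)>0 .$$
For the numerator over $(x_0+\varepsilon,\infty)$, with $s\ge s^\ast+1$, factor out $e^{-(s-s^\ast)(x_0+\varepsilon)}$. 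This leaves
$$\int x e^{-s^\ast x}e^{-(s-s^\ast)(x-x_0-\varepsilon)}\,dV(x)\le \int x e^{-s^\ast x}e^{-(x-x_0-\varepsilon)}\,dV(x)=:C<\infty,$$
which is finite by the integrability assumption, since $xe^{-x}$ is bounded.

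The resulting ratio is at most $C c_\varepsilon^{-1}e^{s^\ast(x_0+\varepsilon)}e^{-s\varepsilon/2}\to0$ as $s\to\infty$. Hence $\limsup\le x_0+\varepsilon$, and letting $\varepsilon\downarrow0$ completes the proof.

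The only delicate point is getting a positive lower bound on the mass near $x_0$. This follows from the definition of $x_0$ as an infimum and from the fact that $V$ is right-continuous and nondecreasing. The rest consists of the routine Laplace-type estimates above.
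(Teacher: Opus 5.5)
Your argument is correct and complete. The paper gives no proof of its own here: the lemma is quoted as part of Lemma 7.1 of Buraczewski, Iksanov and Marynych (2025), so there is no in-paper argument to compare with. Your proof fills that gap with the standard Laplace-type argument. You view the ratio as the mean of the tilted probability measure $\mu_s$, which lives on $[x_0,\infty)$, and this gives the lower bound at once. For the upper bound, you show that $\int_{(x_0+\varepsilon,\infty)}x\,\mu_s({\rm d}x)$ is exponentially small by comparing the factor $\eee^{-s(x_0+\varepsilon)}$ in the numerator with the factor $\eee^{-s(x_0+\varepsilon/2)}$ in the denominator. The constants check out: the quotient is at most $C c_\varepsilon^{-1}\eee^{s^\ast(x_0+\varepsilon)}\eee^{-s\varepsilon/2}$, with $C<\infty$ because $x\eee^{-x}$ is bounded.

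Two small points of precision. First, the positivity of $V(x_0+\varepsilon)$ comes from the definition of $x_0$ as an infimum together with monotonicity of $V$, not from right-continuity. Right-continuity only speaks to $V(x_0)$, which may vanish; for example, $V(x)=x^+$ gives $x_0=0$ and $V(0)=0$. Second, both the statement and your proof tacitly require $V\not\equiv 0$, that is, $x_0<\infty$, since otherwise the denominator is zero. This is harmless in the paper's application in Corollary 4.2, where $x_0=0$.
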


For $k\in\mn\cup \{0\}$, $h^{(k)}$ will denote the $k$th order derivative of $h$ with the usual convention that $h^{(0)}=h$. We shall need the following corollary to Lemma \ref{lem:lst1}.
\begin{cor}\label{cor:g}
Suppose $b=\sup\{x: \mmp\{Q\geq x\}>0\}\in (0,\infty)$. Then, for $k=1,2,3$,
$$\lim_{s\to\infty}\frac{g^{(k)}(s)}{g(s)}=b^k.$$ 
\end{cor}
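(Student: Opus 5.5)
The plan is to reduce to Lemma \ref{lem:lst1} by passing from $Q$ to the nonnegative variable $b-Q$. Since $Q\leq b$ a.s., we may write
$$g(s)=\eee^{bs}\me [\eee^{-s(b-Q)}]=\eee^{bs}\int_{[0,\infty)}\eee^{-sx}{\rm d}V(x),\quad s\geq 0,$$
where $V(x):=\mmp\{b-Q\leq x\}$. This $V$ is right-continuous, nondecreasing, vanishes on $(-\infty,0)$, and satisfies $\int_{[0,\infty)}\eee^{-s^\ast x}{\rm d}V(x)\leq 1<\infty$ for every $s^\ast>0$. By the definition of $b$, $\mmp\{b-Q\leq x\}>0$ for every $x>0$, so $x_0=\inf\{x>0: V(x)>0\}=0$.

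Put $L(s):=\int_{[0,\infty)}\eee^{-sx}{\rm d}V(x)$. Differentiation under the integral sign is legitimate for $s>0$, because $x^j\eee^{-sx}$ is bounded on $[0,\infty)$. It gives $L^{(j)}(s)=(-1)^j\int_{[0,\infty)}\eee^{-sx}x^j{\rm d}V(x)$. Set $m_j(s):=\int \eee^{-sx}x^j{\rm d}V(x)/L(s)$. Since $g(s)=\eee^{bs}L(s)$, Leibniz's rule gives
$$\frac{g^{(k)}(s)}{g(s)}=\sum_{j=0}^k\binom{k}{j}b^{k-j}(-1)^j m_j(s),\quad k=1,2,3.$$
It therefore suffices to prove that $m_j(s)\to 0$ as $s\to\infty$ for $j=1,2,3$; the $j=0$ term then gives the limit $b^k$.

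For $j=1$ this is exactly Lemma \ref{lem:lst1} with $x_0=0$. For $j=2,3$, the main obstacle is that Lemma \ref{lem:lst1} controls only the first moment. I see two ways to handle this.

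The first is to apply the lemma to a new measure ${\rm d}V_1(x):=x\,{\rm d}V(x)$. Its infimum of support is also $0$: if $V(0)=1$, then $b-Q=0$ a.s. and all $m_j$ vanish trivially. Otherwise, near-zero mass at positive $x$ still exists unless the mass sits only at $0$, and that case is again trivial. The lemma then gives $m_2/m_1\to 0$, and iterating gives $m_3/m_2\to 0$. Combined with $m_1\to 0$, this yields $m_2\to 0$ and $m_3\to 0$. The degenerate cases need a little care: if $\int x\,{\rm d}V=0$, then $V$ is concentrated at $0$ and the claim is immediate.

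The second is a direct estimate. Fix $\varepsilon\in(0,1)$ and split the integral at $\varepsilon$:
$$\int \eee^{-sx}x^j{\rm d}V(x)\leq \varepsilon^{j-1}\int\eee^{-sx}x\,{\rm d}V(x)+\sup_{x>\varepsilon}\big(x^j\eee^{-sx/2}\big)\eee^{-s\varepsilon/2}.$$
Then divide by $L(s)\geq \eee^{-s\varepsilon/4}V(\varepsilon/4)$, where $V(\varepsilon/4)>0$. The first term is bounded by $\varepsilon^{j-1}m_1(s)\to 0$. The second term is at most a constant times $\eee^{-s\varepsilon/4}$ and also tends to $0$.

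I would present the direct estimate, since it avoids the iteration and handles all $j$ uniformly.
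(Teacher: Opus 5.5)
Your direct estimate is correct, and it takes a genuinely different route from the paper's proof. The paper handles the higher moments by applying Lemma \ref{lem:lst1} to the weighted functions $V_k(x)=\int_{[0,x]}y^{k-1}\,{\rm d}\mmp\{b-Q\leq y\}$, $k=1,2,3$. This gives $-f^{(k)}(s)/f^{(k-1)}(s)\to 0$ for $f(s)=\me[\eee^{-s(b-Q)}]$. The paper then chains these ratios into $f^{(k)}/f\to 0$ and substitutes into the same Leibniz identities you wrote down, so its argument is essentially your ``first way''.

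Your splitting argument needs only $V(\varepsilon/4)>0$ and the bound on total mass. It is in fact fully self-contained: if you bound $x^j\leq\varepsilon^j$ on $[0,\varepsilon]$ instead of using $\varepsilon^{j-1}x$, you get $\limsup_{s\to\infty}m_j(s)\leq\varepsilon^j$ for every $\varepsilon$, so you do not even need Lemma \ref{lem:lst1}.

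It is also more robust than the iterated route. Both the paper (``necessarily $x_0=0$'') and your first way assert that $y^{k-1}\,{\rm d}V(y)$ charges every neighbourhood of $0$ when $k\geq 2$. That is false when $Q$ has an atom at $b$ separated from the rest of its support by a gap. For instance, in Example 3 with $p<1$ we have $b-Q\in\{0\}\cup[b,\infty)$ a.s. There $x_0\geq b$, and $-f''(s)/f'(s)\geq b$ for all $s$, so this ratio does not tend to $0$.

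The iterated argument survives only because Lemma \ref{lem:lst1} still gives a finite limit for each ratio with $k\geq 2$, while $f'/f\to 0$; hence the product $f^{(k)}/f$ still tends to $0$. The case $Q=b$ a.s., where $V_k\equiv 0$ for $k\geq 2$, is trivial. Your direct estimate needs none of this repair.
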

\begin{proof}
Put $f(s):=\me [\eee^{-s(b-Q)}]$ for $s\geq 0$. Fix any $k\in\mn$. Using Lemma \ref{lem:lst1} with $V(x)=\int_{[0,\,x]}y^{k-1}{\rm d}\mmp\{b-Q\leq y\}$ for $x\geq 0$ and noting that then necessarily $x_0=0$ we conclude that
\begin{equation}\label{eq:inter23557}
-\frac{f^{(k)}(s)}{f^{(k-1)}(s)}=\frac{\int_{[0,\infty)}\eee^{-sx}x^k{\rm d}\mmp\{b-Q\leq x\}}{\int_{[0,\infty)}\eee^{-sx}x^{k-1}{\rm d}\mmp\{b-Q\leq x\}}~\to~0,\quad s\to\infty.
\end{equation}

Observe that $g(s)=\eee^{bs}f(s)$ and thereupon
\begin{multline}\label{eq:equal}
\frac{g^\prime(s)}{g(s)}=b+\frac{f^\prime(s)}{f(s)},\quad 
\frac{g^{\prime\prime}(s)}{g(s)}=b^2+2b\frac{f^\prime(s)}{f(s)}+\frac{f^{\prime\prime}(s)}{f(s)}\quad\text{and}\\\frac{g^{(3)}(s)}{g(s)}=b^3+3b^2\frac{f^\prime(s)}{f(s)}+3b\frac{f^{\prime\prime}(s)}{f(s)}+\frac{f^{(3)}(s)}{f(s)}.
\end{multline}
Now the claim of the corollary follows upon invoking formula \eqref{eq:inter23557} with $k=1,2,3$.
\end{proof}

For $s\geq 0$, put $\varphi_\alpha(s):=\log \me [\eee^{sZ_\alpha}]$ and note that $\varphi_\alpha(s)=\log g(s)+\psi_\alpha(s)$.
\begin{lemma}\label{lem:lst}
Under the standing assumptions, for $k=1,2,3$, $$\varphi_\alpha^{(k)}(s)~\sim~\alpha b^{k-1}\frac{g(s)}{s},\quad s\to\infty.$$
\end{lemma}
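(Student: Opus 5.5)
We split $\varphi_\alpha=\log g+\psi_\alpha$ and show that the $\psi_\alpha$ part dominates, while the $\log g$ part contributes only $O(1)$ to each derivative. By Corollary \ref{cor:g}, $(\log g)'=g'/g\to b$. Also $(\log g)''=g''/g-(g'/g)^2\to b^2-b^2=0$, and $(\log g)^{(3)}=g^{(3)}/g-3(g''/g)(g'/g)+2(g'/g)^3\to 0$. Since $g(s)/s\to\infty$ (shown in the introduction), these bounded terms are negligible against $\alpha b^{k-1}g(s)/s$. It therefore suffices to prove that $\psi_\alpha^{(k)}(s)\sim \alpha b^{k-1}g(s)/s$ for $k=1,2,3$.

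For $k=1$ we have $\psi_\alpha'(s)=\alpha(g(s)-1)/s\sim\alpha g(s)/s$, because $g(s)\to\infty$. Differentiating gives
$$\psi_\alpha''(s)=\alpha\Big(\frac{g'(s)}{s}-\frac{g(s)-1}{s^2}\Big)$$
and
$$\psi_\alpha^{(3)}(s)=\alpha\Big(\frac{g''(s)}{s}-\frac{2g'(s)}{s^2}+\frac{2(g(s)-1)}{s^3}\Big).$$
By Corollary \ref{cor:g}, $g'(s)/s\sim b\,g(s)/s$ and $g''(s)/s\sim b^2g(s)/s$. The remaining terms are $O(g(s)/s^2)$, hence $o(g(s)/s)$, because $g'/g$ is bounded for large $s$. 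This gives $\psi_\alpha''(s)\sim\alpha b\,g(s)/s$ and $\psi_\alpha^{(3)}(s)\sim\alpha b^2g(s)/s$. Here we use $b>0$, so the leading coefficients $b^{k-1}$ do not vanish.

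Differentiation under the integral is legitimate because $Q\le b$ a.s., so $g$ is finite and smooth on $[0,\infty)$. The main point to check is only the bookkeeping showing that the $\log g$ derivatives stay bounded; this follows directly from Corollary \ref{cor:g} with $k\le3$. Combining the two parts yields $\varphi_\alpha^{(k)}(s)\sim\alpha b^{k-1}g(s)/s$ for $k=1,2,3$.
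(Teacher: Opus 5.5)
Your proof is correct and is essentially the paper's argument: you split $\varphi_\alpha=\log g+\psi_\alpha$, use Corollary \ref{cor:g} to show that the derivatives of $\log g$ stay bounded, and compute $\psi_\alpha^{(k)}$ explicitly; the paper's form $\psi_\alpha^{(3)}=(\alpha g''-2\psi_\alpha'')/s$ is the same as your formula. One small inaccuracy, harmless for $s\to\infty$: $g$ is smooth on $(0,\infty)$ but not necessarily at $0$, because $\me[Q]$ need not be finite.
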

\begin{proof}
Observe that $$\varphi_\alpha^\prime(s)=\frac{g^\prime(s)}{g(s)}+\psi_\alpha^\prime(s)=\frac{g^\prime(s)}{g(s)}+\alpha \frac{g(s)-1}{s}.$$ By Corollary \ref{cor:g}, $\lim_{s\to\infty}(g^\prime(s)/g(s))=b$, whence
\begin{equation}\label{eq:inter23}
\varphi_\alpha^\prime(s)~\sim~\psi_\alpha^\prime(s)~\sim~ \alpha \frac{g(s)}{s},\quad s\to\infty.
\end{equation}
Further, $$\varphi_\alpha^{\prime\prime}(s)=\frac{g^{\prime \prime}(s)}{g(s)}-\Big(\frac{g^\prime(s)}{g(s)}\Big)^2+\frac{\alpha g^\prime (s)-\psi_\alpha^\prime(s)}{s}.$$ By Corollary \ref{cor:g}, as $s\to\infty$, the sum of the first two terms on the right-hand side vanishes, whereas $g^\prime(s)\sim bg(s)$. This together with \eqref{eq:inter23} proves $$\varphi_\alpha^{\prime\prime}(s)~\sim~\psi_\alpha^{\prime\prime}(s)~\sim~\alpha b \frac{g(s)}{s},\quad s\to\infty.$$ Finally, $$\varphi_\alpha^{(3)}(s)=\frac{g^{(3)}(s)}{g(s)}-3\frac{g^{\prime\prime}(s)g^\prime(s)}{(g(s))^2}+2\Big(\frac{g^\prime(s)}{g(s)}\Big)^3+\frac{\alpha g^{\prime\prime} (s)-2\psi_\alpha^{\prime\prime}(s)}{s},$$ and, by the same reasoning as before, $\varphi_\alpha^{(3)}(s)~\sim~\psi_\alpha^{(3)}(s)~\sim~\alpha b^2 g(s)/s$ as $s\to\infty$.
\end{proof}

To close the section, we prove formulae \eqref{eq:asymp1021} and \eqref{eq:asymp10211} from Section \ref{sect:examples}.

\noindent {\sc Proof of \eqref{eq:asymp1021} and \eqref{eq:asymp10211}}. We start by proving \eqref{eq:asymp1021}. To this end, we use representation $g(s)=\eee^{bs}f(s)$ and assume that the standing assumptions are in force. By the definition of $s_\alpha$, $$\eee^{bs_\alpha(t)}f(s_\alpha(t))=1+\alpha^{-1}ts_\alpha(t)$$ or equivalently
\begin{equation}\label{eq:formula}
s_\alpha(t)=b^{-1}\log (t/\alpha)+b^{-1}(\log(s_\alpha(t)+\alpha/t)-\log f(s_\alpha(t))).
\end{equation}
Then the relation $s_\alpha(t)\sim b^{-1}\log t$ as $t\to\infty$ holds provided we can show that $$\lim_{s\to\infty}s^{-1}\log f(s)=0.$$ The latter is trivial if $\mmp\{Q=b\}>0$ because then $\lim_{s\to\infty}f(s)=\mmp\{Q=b\}>0$. If $\mmp\{Q=b\}=0$, then the desired limit relation is secured by L'H\^{o}pital's rule and \eqref{eq:inter23557} with $k=1$.

Now we show that
\begin{equation}\label{eq:form}
\lim_{t\to\infty}\frac{\alpha\int_0^{s_\alpha(t)}y^{-1}(g(y)-1){\rm d}y}{t}=b^{-1}.
\end{equation}
The second part of \eqref{eq:asymp1021} is then an immediate consequence of this limit relation and the first part of \eqref{eq:asymp1021}. Since the function $y\mapsto y^{-1}(g(y)-1)$ is differentiable on $(0,\infty)$, so is $s_\alpha$. Differentiating $g(s_\alpha(t))=1+\alpha^{-1}ts_\alpha(t)$ we obtain $$g^\prime(s_\alpha(t))s_\alpha^\prime(t)=\alpha^{-1}(s_\alpha(t)+ts^\prime_\alpha(t))$$  or equivalently $$\frac{g^\prime(s_\alpha(t))}{g(s_\alpha(t))}=\frac{s_\alpha(t)+ts^\prime_\alpha(t)}{\alpha g(s_\alpha(t))s^\prime_\alpha(t)}.$$ By Corollary \ref{cor:g} with $k=1$, the left-hand side converges to $b$ as $t\to\infty$. This in combination with
\begin{equation}\label{eq:asymp}
g(s_\alpha(t))~\sim~\alpha^{-1}ts_\alpha(t),\quad t\to\infty
\end{equation}
yields $$\lim_{t\to\infty}ts^\prime_\alpha(t)=b^{-1}$$ and thereupon \eqref{eq:form}. Indeed,
\begin{equation}\label{eq:form2}
\Big(\alpha \int_0^{s_\alpha(t)}\frac{g(y)-1}{y}{\rm d}y\Big)^\prime= \alpha \frac{g(s_\alpha(t))-1}{s_\alpha(t)}s^\prime_\alpha(t)=ts^\prime_\alpha(t),
\end{equation}
and \eqref{eq:form} follows by an application of L'H\^{o}pital's rule in combination with \eqref{eq:form2} and the preceding centered formula.

Next, we treat \eqref{eq:asymp10211}. If $x\mapsto \mmp\{b-Q\leq x\}$ is regularly varying at $0$ of index $\theta$, then, by Theorem 1.7.1' on p.~38 in \cite{BGT:1989}, $f$ is regularly varying at $\infty$ of index $-\theta$.   Then \eqref{eq:formula} entails $$s_\alpha(t)=b^{-1}(\log (t/\alpha)+(1+\theta)\log(s_\alpha(t)))+o(\log (s_\alpha(t))),\quad t\to\infty,$$ and the first part of \eqref{eq:asymp10211} follows.
If $\mmp\{Q=b\}>0$, then, by the reasoning used to prove the first part of \eqref{eq:asymp1021}, the latter representation holds with $\theta=0$, thereby ensuring the first part of \eqref{eq:asymp10211}. The second part of \eqref{eq:asymp10211} follows from \eqref{eq:form} and the first part of \eqref{eq:asymp10211}.

\section{Proof of Theorem \ref{thm:main}}\label{sect:proofs}

We shall prove the asymptotic formula for $p_\alpha(t)$. The asymptotic relation $\mmp\{Z_\alpha>t\}\sim p_\alpha(t)/s_\alpha(t)$ as $t\to\infty$ then follows by L'H\^{o}pital's rule upon noting that, according to \eqref{eq:form2},
\begin{equation*}
\Big(ts_\alpha(t)-\alpha\int_0^{s_\alpha(t)}\frac{g(y)-1}{y}{\rm d}y\Big)^\prime=s_\alpha(t).
\end{equation*}

For each $s > 0$, define a new probability measure $\mmp^{(s)}$
by
\begin{equation}\label{eq:change}
\me^{(s)}[f(Z_\alpha)] =
\frac{\me[\eee^{sZ_\alpha}f(Z_\alpha)]}{\me [\eee^{sZ_\alpha}]},
\end{equation}
where $\me^{(s)}$ denotes expectation with respect to $\mmp^{(s)}$. The equality is assumed to hold for each bounded Borel function $f:\mr\to\mr$. Actually, by a standard argument, the equality holds true for any Borel function $f : \mr\to\mr$ which is not necessarily
bounded, whenever the left- or right-hand side of \eqref{eq:change} is well defined, possibly infinite. An application of formula \eqref{eq:change}  with $f(x)=x$ yields $$\me^{(s)}[Z_\alpha]=\frac{\me [\eee^{sZ_\alpha}Z_\alpha]}{\me [\eee^{sZ_\alpha}]}
=\varphi_\alpha^\prime(s),\quad s>0.$$

Recall that the $\mmp$-distribution of $Z_\alpha$ is absolutely continuous with a density that we denoted by $p_\alpha$. For each $s>0$, the $\mmp^{(s)}$-distribution of $Z_\alpha$ is also absolutely continuous. Denote its density by $p_\alpha^{(s)}$. Then $$p_\alpha^{(s)}(t)=\frac{\eee^{st}p_\alpha(t)}{\me [\eee^{sZ_\alpha}]},\quad t\in\mr,~s>0$$ or equivalently
\begin{equation}\label{eq:change3}
p_\alpha(t)=\eee^{-st}\me [\eee^{sZ_\alpha}]p_\alpha^{(s)}(t),\quad t\in\mr,~s>0.
\end{equation}
Formula \eqref{eq:change3} with $s=s_\alpha(t)$ reads $$p_\alpha(t)=\eee^{-t 
s_\alpha(t)+\varphi_\alpha(s_\alpha(t))} p_\alpha^{(s_\alpha(t))}(t).$$  
Invoking $\eee^{\varphi_\alpha(s)}=g(s)\eee^{\psi_\alpha(s)}$ for $s>0$ and \eqref{eq:asymp} we infer $$\eee^{-ts_\alpha(t)+\varphi_\alpha (s_\alpha(t))}~\sim~ \frac{1
}{\alpha} t s_\alpha(t) \exp\Big(-ts_\alpha(t)+\alpha\int_0^{s_\alpha(t)}\frac{g(y)-1}{y}{\rm d}y\Big),\quad t\to\infty.$$ Thus, it remains to prove that
\begin{equation}\label{eq:asymp3}
p_\alpha^{(s_\alpha(t))}(t)~\sim~\frac{1}{(2\pi b t)^{1/2}},\quad t\to\infty.
\end{equation}

Under $\mmp^{(s_\alpha(t))}$, put $Z_\alpha^{(s_\alpha(t))}:=Z_\alpha-\me^{(s_\alpha(t))}[Z_\alpha]=Z_\alpha-\varphi_\alpha^\prime(s_\alpha(t))$.
We first show that the $\mmp^{(s_\alpha(t))}$-distributions of the variables $t^{-1/2}Z_\alpha^{(s_\alpha(t))}$ converge weakly as $t\to\infty$ to the centered normal distribution with variance $b$. By the L\'{e}vy continuity theorem for characteristic functions this will imply that
\begin{equation}\label{eq:charfunc} \lim_{t\to\infty}\me^{(s_\alpha(t))}\big[\eee^{{\rm i}ut^{-1/2}Z_\alpha^{(s_\alpha(t))}}\big]=\eee^{-bu^2/2},\quad u\in\mr.
\end{equation}
To prove the weak convergence, it is enough to check that, for each $u\in\mr$,
\begin{equation}\label{eq:charfunc2} \lim_{t\to\infty}\me^{(s_\alpha(t))}\big[\eee^{ut^{-1/2}Z_\alpha^{(s_\alpha(t))}}\big]=\eee^{bu^2/2}.
\end{equation}

\noindent {\sc Proof of \eqref{eq:charfunc2}}. Using formula \eqref{eq:change} with $f(x)=\eee^{ut^{-1/2}x}$ we infer
\begin{multline*}
\me^{(s_\alpha(t))}\big[\eee^{ut^{-1/2}Z_\alpha^{(s_\alpha(t))}}\big]=\frac{\me [\eee^{(s_\alpha(t)+ut^{-1/2})Z_\alpha}]}{\me [\eee^{s_\alpha(t) Z_\alpha}]}\eee^{-ut^{-1/2}\varphi_\alpha^\prime(s_\alpha(t))}\\=\exp\big(\varphi_\alpha(s_\alpha(t)+ut^{-1/2})-\varphi_\alpha(s_\alpha(t))-ut^{-1/2}\varphi_\alpha^\prime(s_\alpha(t))\big).
\end{multline*}
By the mean value theorem for twice differentiable functions, there exists $\vartheta_1=\vartheta_1(t,u)\in (0,1)$ such that
\begin{equation*}
\me^{(s_\alpha(t))}\big[\eee^{ut^{-1/2}Z_\alpha^{(s_\alpha(t))}}\big]=\exp\Big(u^2\frac{\varphi_\alpha^{\prime\prime}(s_\alpha(t))}{2t}\Big)\exp\Big(u^2\frac{\varphi_\alpha^{\prime\prime}(s_\alpha(t)+\vartheta_1 ut^{-1/2})-\varphi_\alpha^{\prime\prime}(s_\alpha(t))}{2t}\Big).
\end{equation*}
Lemma \ref{lem:lst} in combination with formula \eqref{eq:asymp} ensures that $$\varphi_\alpha^{\prime\prime}(s_\alpha(t))~\sim~\alpha b g(s_\alpha(t))/s_\alpha(t)~\sim~bt,\quad t\to\infty.$$ Thus, \eqref{eq:charfunc2} follows if we can prove that
\begin{equation}\label{eq:aux1}
\lim_{t\to\infty} \frac{\varphi_\alpha^{\prime\prime}(s_\alpha(t)+\vartheta_1 ut^{-1/2})-\varphi_\alpha^{\prime\prime}(s_\alpha(t))}{t}=0.
\end{equation}
To this end, note that, by the mean value theorem for differentiable functions, there exists $\vartheta_2=\vartheta_2(t,u)\in (0,1)$ such that, for $u\neq 0$,
\begin{multline*}
\varphi_\alpha^{\prime\prime}(s_\alpha(t)+\vartheta_1 ut^{-1/2})-\varphi_\alpha^{\prime\prime}(s_\alpha(t))=\vartheta_1 ut^{-1/2}\varphi_\alpha^{(3)}(s_\alpha(t)+\vartheta_2 ut^{-1/2})\\~\sim~\alpha b^2\vartheta_1 ut^{-1/2}g(s_\alpha(t))/s_\alpha(t)~\sim~ b^2\vartheta_1 ut^{1/2},\quad t\to\infty.
\end{multline*}

Here, the first asymptotic relation follows from Lemma \ref{lem:lst} and Corollary \ref{cor:g}. Indeed, the latter, together with the mean value theorem for differentiable functions applied to $\log g$, ensures that
$\lim_{t\to\infty} (g(s_\alpha(t)+\vartheta_2ut^{-1/2})/g(s_\alpha(t)))=1$. 
The second asymptotic relation is justified by \eqref{eq:asymp}. For $u=0$, the difference on the left-hand side vanishes identically. Hence, in either case, the difference is $o(t)$. The proof of \eqref{eq:aux1} is complete and so is that of \eqref{eq:charfunc2}.

The density $p_\alpha^{(s)}$ is not necessarily bounded or continuous on $\mr$. Therefore, its characteristic function $u\mapsto \me^{(s)}[\eee^{{\rm i}uZ_\alpha}]$ is not necessarily absolutely integrable on $\mr$. For instance, if $Q=1$ a.s., then, according to Theorem 4.7.7(b) in \cite{Vervaat:1972}, $$p_\alpha^{(s)}(t)=\kappa \frac{\eee^{st}(t-1)^{\alpha-1}}{\me [\eee^{sZ_\alpha}]},\quad t\in (1,2)$$ for an explicitly known constant $\kappa>0$. Thus, if $\alpha\in (0,1)$, the function $p_\alpha^{(s)}$ is unbounded at the point $1$. Since we are going to use the Fourier inversion formula, the density $p_\alpha^{(s)}$ has to be smoothed in a standard way described below.

Put $\lambda(u):=(1-|u|)\1_{[-1,\,1]}(u)$ for $u\in\mr$. The function $\lambda$ so defined is the characteristic function of the probability distribution with density $v$ defined by $v(x):=(1-\cos x)/(\pi x^2)$ for $x\in\mr$. For each $c>0$, put $v_c(x):=cv(cx)$ for $x\in\mr$. The corresponding characteristic function is then $\lambda_c(u):=\lambda(u/c)$ for $u\in\mr$. Invoking $$\int_\mr \eee^{{\rm i}ux}\int_\mr p_\alpha^{(s)}(x-y)v_c(y){\rm d}y{\rm d}x=\me^{(s)}[\eee^{{\rm i}uZ_\alpha}]\lambda_c(u)$$ for $u\in\mr$ and $s>0$ the Fourier inversion gives (replacing $s$ with $s_\alpha(t)$) $$\int_\mr p_\alpha^{(s_\alpha(t))}(x-y)v_c(y){\rm d}y=\frac{1}{2\pi}\int_{-c}^c \eee^{-{\rm i}ux}\me^{(s_\alpha(t))}[\eee^{{\rm i}uZ_\alpha}]\lambda_c(u){\rm d}u,\quad x\in\mr.$$ We intend to use this formula with $x=x_t:=t\pm (\log t)^{-1-\varepsilon}$ for some $\varepsilon>0$. Observe that $$t=\psi_\alpha^\prime(s_\alpha(t))=\varphi_\alpha^\prime(s_\alpha(t))-\frac{g^\prime(s_\alpha(t))}{g(s_\alpha(t))}=\me^{(s_\alpha(t))}[Z_\alpha]-b+r_\alpha(t)$$ for some $r_\alpha$ vanishing at $\infty$. The last equality is justified by Corollary \ref{cor:g}. Hence, $x_t=\me^{(s_\alpha(t))}[Z_\alpha]-b+r_\alpha(t)\pm (\log t)^{-1-\varepsilon}$. Making the change of variables $u=wt^{-1/2}$, we obtain
\begin{multline*}
t^{1/2}\int_\mr p_\alpha^{(s_\alpha(t))}(x_t-y)v_c(y){\rm d}y\\=\frac{1}{2\pi}\int_{-ct^{1/2}}^{ct^{1/2}} \eee^{-{\rm i}t^{-1/2}w(-b+r_\alpha(t)\pm (\log t)^{-1-\varepsilon})}\me^{(s_\alpha(t))}\big[\eee^{{\rm i}t^{-1/2}wZ^{(s_\alpha(t))}_\alpha}\big]\lambda_c(t^{-1/2}w){\rm d}w.
\end{multline*}
Put $c_t:=t^{1/2}(\log t)^{2+3\varepsilon}$ with the same $\varepsilon$ as in the definition of $x_t$. We shall prove below that
\begin{equation}\label{eq:inter234}
\lim_{t\to\infty}\int_{-c_tt^{1/2}}^{c_tt^{1/2}} \eee^{-{\rm i}t^{-1/2}w(-b+r_\alpha(t)\pm (\log t)^{-1-\varepsilon})}\me^{(s_\alpha(t))}\big[\eee^{{\rm i}t^{-1/2}wZ^{(s_\alpha(t))}_\alpha}\big]\lambda_{c_t}(t^{-1/2}w){\rm d}w=(2\pi/b)^{1/2}.
\end{equation}
We claim that this entails \eqref{eq:asymp3}. Indeed, according to \eqref{eq:inter234}, $$t^{1/2}\int_\mr p_\alpha^{(s_\alpha(t))}(x_t-y)v_{c_t}(y){\rm d}y=t^{1/2}\int_\mr p_\alpha^{(s_\alpha(t))}(x_t-y/c_t)v(y){\rm d}y~\to~(2\pi b)^{-1/2},\quad t\to\infty.$$ Recall that the distribution of $Z_\alpha-Q_1$ is nondegenerate and selfdecomposable. According to Theorem 1 in \cite{Yamazato:1978}, each nondegenerate selfdecomposable distribution is unimodal. This implies that there exists $x_\ast\geq 0$ such that $q_\alpha$ is nonincreasing on $(x_\ast,\infty)$. We note in passing that differentiating \eqref{eq:dens} in the case $\alpha\in (0,1]$ we conclude that $q_\alpha$ is nonincreasing on $(0,\infty)$.
In view of $p_\alpha(x)=\int_{(-\infty,\,b]}q_\alpha(x-y){\rm d}\mmp\{Q\leq y\}$ we conclude that $p_\alpha$ is nonincreasing on $(x_\ast+b,\infty)$. Write, with $a_t:=t^{1/2}(\log t)^{1+2\varepsilon}$, where $\varepsilon$ is the same as above, $$\int_\mr p_\alpha^{(s_\alpha(t))}(x_t-y/c_t)v(y){\rm d}y=\int_{-a_t}^{a_t} p_\alpha^{(s_\alpha(t))}(x_t-y/c_t)v(y){\rm d}y+\int_{|y|>a_t} p_\alpha^{(s_\alpha(t))}(x_t-y/c_t)v(y){\rm d}y.$$ Using $v(x)\leq 2/(\pi x^2)$ for $x\neq 0$ we conclude that
\begin{multline*}
t^{1/2}\int_{|y|>a_t} p_\alpha^{(s_\alpha(t))}(x_t-y/c_t)v(y){\rm d}y\leq (2t^{1/2}/(\pi a_t^2))\int_\mr p_\alpha^{(s_\alpha(t))}(x_t-y/c_t){\rm d}y\\=2t^{1/2}c_t/(\pi a_t^2)~\to~ 0,\quad t\to\infty.
\end{multline*} Further, 
\begin{multline*}
\int_{-a_t}^{a_t} p_\alpha^{(s_\alpha(t))}(t+(\log t)^{-1-\varepsilon}-y/c_t)v(y){\rm d}y\\=\int_{-a_t}^{a_t}\frac{\eee^{s_\alpha(t)(t+(\log t)^{-1-\varepsilon}-y/c_t)}}{\me [\eee^{s_\alpha (t)Z_\alpha}]} p_\alpha(t+(\log t)^{-1-\varepsilon}-y/c_t)v(y){\rm d}y\\\leq \frac{\eee^{s_\alpha(t)(t+2(\log t)^{-1-\varepsilon})}}{\me [\eee^{s_\alpha (t)Z_\alpha}]}p_\alpha(t)\int_{-a_t}^{a_t}v(y){\rm d}y~\sim~ \frac{\eee^{s_\alpha(t)t}}{\me [\eee^{s_\alpha (t)Z_\alpha}]}p_\alpha(t)=p_\alpha^{(s_\alpha(t))}(t),\quad t\to\infty.
\end{multline*}
We have used $a_t/c_t=(\log t)^{-1-\varepsilon}$ and monotonicity of $p_\alpha$ for large arguments for the inequality. The asymptotic relation $s_\alpha(t)\sim b^{-1}\log t$ as $t\to\infty$ (see \eqref{eq:asymp1021}) entails \newline $\lim_{t\to\infty}s_\alpha(t)(\log t)^{-1-\varepsilon}=0$. This together with the fact that $v$ is a density justifies the asymptotic equivalence. This proves $\liminf_{t\to\infty}t^{1/2}p_\alpha^{(s_\alpha(t))}(t)\geq (2\pi b)^{-1/2}$. The proof of the converse inequality for the upper limit runs similarly:
\begin{multline*}
\int_{-a_t}^{a_t} p_\alpha^{(s_\alpha(t))}(t-(\log t)^{-1-\varepsilon}-y/c_t)v(y){\rm d}y\geq \frac{\eee^{s_\alpha(t)(t-2(\log t)^{-1-\varepsilon})}}{\me [\eee^{s_\alpha (t)Z_\alpha}]}p_\alpha(t)\int_{-a_t}^{a_t}v(y){\rm d}y\\~\sim~ p_\alpha^{(s_\alpha(t))}(t),\quad t\to\infty.
\end{multline*}

\noindent {\sc Proof of \eqref{eq:inter234}.} Observe that $\int_\mr \eee^{-bw^2/2}{\rm d}w=(2\pi/b)^{1/2}$. Hence, it suffices to prove that, for all $A > 0$,
\begin{equation}\label{eq:step1}
\lim_{t\to\infty}\int_{-A}^A\Big|\eee^{-{\rm i}t^{-1/2}w(-b+r_\alpha(t)\pm (\log t)^{-1-\varepsilon})}\me^{(s_\alpha(t))}\big[\eee^{{\rm i}t^{-1/2}wZ^{(s_\alpha(t))}_\alpha}\big]\lambda_{c_t}(t^{-1/2}w)-\eee^{-bw^2/2}\Big|{\rm d}w=0,
\end{equation}
\begin{equation}\label{eq:step2}
\lim_{A\to\infty} \limsup_{t\to\infty} \int_{A<|w|\leq c_t t^{1/2}}\big|\me^{(s_\alpha(t))}[\eee^{{\rm i}t^{-1/2}wZ_\alpha}]\big|{\rm d}w=0
\end{equation}
and that
$\lim_{A\to\infty}\int_{|w|>A}\eee^{-bw^2/2}{\rm d}w=0$. The latter limit relation holds trivially.

We start by proving \eqref{eq:step1}. The function identically equal to $1$ on $\mr$ is the characteristic function of the distribution degenerate at $0$. 
The characteristic functions $w\mapsto \eee^{-{\rm i}t^{-1/2}w(-b+r_\alpha(t) \pm (\log t)^{-1-\varepsilon})} \lambda_{c_t}(t^{-1/2}w)$ converge to $1$ as $t\to\infty$. Using this in combination with \eqref{eq:charfunc} and recalling that the convergence of characteristic functions is locally uniform we arrive at \eqref{eq:step1}.

As a preparation for a proof of \eqref{eq:step2}, write
\begin{multline}
\big|\me^{(s)}[\eee^{{\rm i}u Z_\alpha}]\big|=\Big|\frac{\me [\eee^{(s+{\rm i}u)Q_1}]}{\me [\eee^{sQ_1}]}\Big|\Big|\frac{\me [\eee^{(s+{\rm i}u)(Z_\alpha-Q_1)}]}{\me [\eee^{s(Z_\alpha-Q_1)}]}\Big|\leq \Big|\frac{\me [\eee^{(s+{\rm i}u)(Z_\alpha-Q_1)}]}{\me [\eee^{s(Z_\alpha-Q_1)}]}\Big|\\=\Big|\exp\Big(\alpha\int_0^1 \frac{g((s+{\rm i}u)x)-g(sx)}{x}{\rm d}x\Big)\Big|=\exp\Big(-\alpha\int_0^1 \frac{\me [\eee^{sxQ}(1-\cos (uxQ))]}{x}{\rm d}x\Big)\label{eq:good}\\\leq \exp\Big(-\me\Big[\alpha \int_0^{Q^+} \frac{\eee^{sy}(1-\cos(uy))}{y}{\rm d}y\Big]\Big).
\end{multline}
We shall use the inequality $1-\cos x=2(\sin (x/2))^2\geq 2x^2/\pi^2$ for $x\in [-\pi, \pi]$. Let $|u|\leq \pi/b$. Since $Q\leq b$ a.s., $y\in [0,Q^+]$ entails $uy\in [-\pi, \pi]$ and $1-\cos (uy)\geq 2u^2y^2/\pi^2$. As a consequence,
\begin{multline*}
\big|\me^{(s)}[\eee^{{\rm i}u Z_\alpha}]\big|\leq \exp\Big(-2 \pi^{-2}\alpha u^2\me\Big[\int_0^{Q^+} \eee^{sy}y{\rm d}y\Big]\Big)\\=\exp\Big(-\frac{2\alpha}{\pi^2} \frac{\me\big[
sQ^+\eee^{sQ^+}-\eee^{sQ^+}+1\big]}{s^2}u^2\Big).
\end{multline*}
For a constant $B>0$, $$\int_A^\infty \eee^{-Bu^2}{\rm d}u\leq (2AB)^{-1}\eee^{-BA^2}.$$ As a consequence, with $\tau_\alpha(t):=(s_\alpha(t))^{-2}\me\big[
s_\alpha(t)Q^+\eee^{s_\alpha(t)Q^+}-\eee^{s_\alpha(t)Q^+}+1\big]$,
\begin{multline*}
\int_{A<|w|\leq \pi b^{-1}t^{1/2}}\big|\me^{(s_\alpha(t))}[\eee^{{\rm i}t^{-1/2}wZ_\alpha}]\big|{\rm d}w\leq \int_{|w|>A}\exp\Big(-\frac{2\alpha}{\pi^2}\frac{\tau_\alpha(t)}{t} w^2 \Big){\rm d}w\\\leq \frac{\pi^2 t}{4A\alpha\tau_\alpha(t)}\exp\Big(-\frac{2\alpha}{\pi^2}\frac{\tau_\alpha(t)}{t}A^2\Big).
\end{multline*}
We claim that
\begin{equation}\label{eq:inter235}
\lim_{t\to\infty}\frac{\tau_\alpha(t)}{t}=\frac{b}{\alpha}
\end{equation}
which secures $$\lim_{A\to\infty}\limsup_{t\to\infty}\int_{A<|w|\leq \pi b^{-1}t^{1/2}}\big|\me^{(s_\alpha(t))}[\eee^{{\rm i}t^{-1/2}wZ_\alpha}]\big|{\rm d}w=0.$$

To prove \eqref{eq:inter235}, we first observe that
\begin{equation}\label{eq:inter237}
g(s)=\me [\eee^{sQ^+}]-\mmp\{Q<0\}+\me [\eee^{sQ}\1_{\{Q<0\}}],
\end{equation}
whence $g(s)\sim \me [\eee^{sQ^+}]$ as $s\to\infty$. This together with \eqref{eq:asymp} ensures that $$\frac{\me[\eee^{s_\alpha(t)Q^+}]}{(s_\alpha(t))^2}~\sim~\frac{g(s_\alpha(t))}{(s_\alpha(t))^2}~\sim~\frac{t}{\alpha s_\alpha(t)}=o(t),\quad t\to\infty.$$ Differentiating \eqref{eq:inter237} we also conclude that $g^\prime(s)\sim \me [Q^+\eee^{sQ^+}]$ as $s\to\infty$. Using \eqref{eq:asymp} 
in combination with $g^\prime(s)\sim bg(s)$ as $s\to\infty$ (see Corollary \ref{cor:g}) we infer $$\frac{\me [Q^+\eee^{s_\alpha(t) Q^+}]}{s_\alpha(t)}~\sim~\frac{g^\prime(s_\alpha(t))}{s_\alpha(t)}~\sim~\frac{b}{\alpha}t,\quad t\to\infty,$$ thereby completing the proof of \eqref{eq:inter235}.

It remains to show that
\begin{equation}\label{eq:final}
\lim_{t\to\infty}\int_{\pi b^{-1}t^{1/2}\leq |w|\leq c_tt^{1/2} }\big|\me^{(s_\alpha(t))}[\eee^{{\rm i}t^{-1/2}wZ_\alpha}]\big|{\rm d}w=0.
\end{equation}
It can be checked with the help of either repeated integration by parts or complex integration that, for $a>0$ and $c\in\mr$, $$\int_0^1 \eee^{ax}\cos(cx){\rm d}x=\frac{\eee^a-1}{a}\frac{a(a\cos c+c\sin c)}{a^2+c^2}+\frac{a\cos c+c\sin c-a}{a^2+c^2},$$ whence $$J_{a,\,c}:=\int_0^1 \eee^{ax}(1-\cos(cx)){\rm d}x=\frac{\eee^a-1}{a}\Big(1-\frac{a(a\cos c+c\sin c)}{a^2+c^2}\Big)-\frac{a\cos c+c\sin c-a}{a^2+c^2}.$$ Let $\Theta$ be an angle such that $\cos \Theta=a(a^2+c^2)^{-1/2}$. Then
\begin{multline*}
J_{a,\,c}=\frac{\eee^a-1}{a}\Big(1-\frac{a \cos(\Theta-c)}{(a^2+c^2)^{1/2}}\Big)-\frac{\cos(\Theta-c)-\cos \Theta}{(a^2+c^2)^{1/2}}\\\geq \frac{\eee^a-1}{a}\Big(1-\frac{a}{(a^2+c^2)^{1/2}}\Big)-\frac{2}{(a^2+c^2)^{1/2}}.
\end{multline*}
Recalling the inequality obtained in the second line of \eqref{eq:good} we write, for $|u|\geq \pi/b$,
\begin{multline*}
\int_0^1 \frac{\me [\eee^{sxQ}(1-\cos (uxQ))]}{x}{\rm d}x\geq \me \big[\1_{\{Q^+\geq b/2\}}J_{sQ^+,\,uQ^+}\big]\\\geq \me \Big[\1_{\{Q^+\geq b/2\}}\Big(\frac{\eee^{sQ^+}-1}{sQ^+}\Big(1-\frac{s}{(s^2+u^2)^{1/2}}\Big)-\frac{2}{Q^+(s^2+u^2)^{1/2}}\Big)\Big]\\\geq \me \Big[\1_{\{Q^+\geq b/2\}}\Big(\frac{\eee^{sQ^+}-1}{sQ^+}\Big(1-\frac{s}{(s^2+(\pi/b)^2)^{1/2}}\Big)-\frac{2}{Q^+(s^2+(\pi/b)^2)^{1/2}}\Big)\Big]
\end{multline*}
Using $$1-\frac{s}{(s^2+(\pi/b)^2)^{1/2}}=\frac{(\pi/b)^2}{(s^2+(\pi/b)^2)^{1/2}((s^2+(\pi/b)^2)^{1/2}+s)}\geq \frac{(\pi/b)^2}{2(s^2+(\pi/b)^2)}$$ we obtain
\begin{multline*}
\int_0^1 \frac{\me [\eee^{sxQ}(1-\cos (uxQ))]}{x}{\rm d}x\\\geq \me\Big[\1_{\{Q^+\geq b/2\}}\Big(\frac{\eee^{sQ^+}-1}{sQ^+}\frac{\pi^2}{2b^2(s^2+(\pi/b)^2)}-\frac{2}{Q^+(s^2+(\pi/b)^2)^{1/2}}\Big)\Big]\\\geq \frac{\pi^2}{2b^3s(s^2+(\pi/b)^2)}\me\big[(\eee^{sQ^+}-1)\1_{\{Q^+\geq b/2\}}\big]-\frac{4}{b(s^2+(\pi/b)^2)^{1/2}}.
\end{multline*}
Fix any $d\in (b/2, b)$. By Markov's inequality, $\me [\eee^{sQ^+}]\geq \eee^{ds}\mmp\{Q>d\}$. Thus, $$\frac{\me [\eee^{sQ^+}\1_{\{Q^+<b/2\}}]}{\me [\eee^{sQ^+}]}\leq \frac{\eee^{-(d-b/2)s}}{\mmp\{Q>d\}}~\to~0,\quad s\to\infty$$ and thereupon $$\me [(\eee^{sQ^+}-1)\1_{\{Q^+\geq b/2\}}]~\sim~\me [\eee^{sQ^+}], 
\quad s\to\infty.$$
Recall that $$\me [\eee^{s_\alpha(t)Q^+}]~\sim~\alpha^{-1}ts_\alpha(t),\quad t\to\infty$$ and that, according to \eqref{eq:asymp1021}, $s_\alpha(t)\sim b^{-1}\log t$ as $t\to\infty$. With this at hand, $$\int_0^1 \frac{\me [\eee^{s_\alpha(t)xQ}(1-\cos (uxQ))]}{x}{\rm d}x\geq \Big(\frac{\pi^2}{2\alpha b}+o(1)\Big)\frac{t}{(\log t)^2}-O((\log t)^{-1}),\quad t\to\infty
$$ uniformly in $|u|\geq \pi/b$. Hence,
\begin{multline*}
\int_{\pi b^{-1}t^{1/2}\leq |w|\leq c_tt^{1/2} }\big|\me^{(s_\alpha(t))}[\eee^{{\rm i}t^{-1/2}wZ_\alpha}]\big|{\rm d}w\\\leq \int_{\pi b^{-1}t^{1/2}\leq |w|\leq c_tt^{1/2} }\exp\Big(-\alpha\int_0^1\frac{\me [\eee^{s_\alpha(t)xQ}(1-\cos(t^{-1/2}wxQ))]}{x} {\rm d}x\Big){\rm d}w\\\leq 2c_t t^{1/2}\exp(-C_1 t(\log t)^{-2})~\to~0,\quad t\to\infty
\end{multline*}
for an appropriate constant $C_1>0$. This completes the proof of both \eqref{eq:final} and Theorem \ref{thm:main}.

\begin{rem}
Here, we discuss an alternative proof of the asymptotic formula for \newline $\mmp\{Z_\alpha>t\}$.

By the same reasoning as used in the proof of Theorem \ref{thm:main} it can be shown that $$q_\alpha(t)~\sim~\frac{1}{(2\pi bt)^{1/2}}\exp\Big(-ts_\alpha(t)+\alpha \int_0^{s_\alpha(t)}\frac{g(y)-1}{y}{\rm d}y\Big),\quad t\to\infty.$$ The asymptotic formula for $\mmp\{Z_\alpha>t\}$ then follows from the latter limit relation and \eqref{eq:asymp1}.
\end{rem}

\section{Comments on the proof of Proposition \ref{prop:Dickman}} \label{sect:prop}

(a,b) Fix any $s>0$. By Theorem 3 in \cite{Buraczewski etal:2018}, the assumptions $\mmp\{M_\alpha\in (0,1)\}=1$ and $\me [\eee^{sQ}]<\infty$ entail $\me [\eee^{sZ_\alpha}]<\infty$ and thereupon $\me [\eee^{s(Z_\alpha-Q_1)}]<\infty$.

Put $S_k:=-\log (M_{\alpha,\,1}\cdot\ldots \cdot M_{\alpha,\,k})$ for $k\geq 1$. Then $(S_k)_{k\geq 1}$ is the standard random walk with increments having the exponential distribution of mean $1/\alpha$. Furthermore, $S_1$, $S_2,\ldots$ are the successive jump times of the compound Poisson process $A(t):=\sum_{k\geq 1}Q_{k+1}\1_{\{S_k\leq t\}}$ for $t\geq 0$. The representation $$Z_\alpha-Q_1=\sum_{k\geq 1}M_{\alpha,\,1}\cdot\ldots\cdot M_{\alpha,\,k} Q_{k+1}=\int_{[0,\infty)}\eee^{-y}{\rm d}A(y)$$ is then obvious. As follows from \cite{Jurek+Vervaat:1983}, all the claims of parts (a,b) of the proposition are consequences of this representation. To be more precise, see formula (2.2b) on p.~250 for the L\'{e}vy measure, Section 3 for selfdecomposability and formula (4.4) on p.~255 for a counterpart of $$\log \me [\eee^{s(Z_\alpha-Q_1)}]=\alpha\int_0^s \frac{\me [\eee^{yQ}]-1}{y}{\rm d}y,\quad s\geq 0$$ in terms of characteristic functions (the passage to the moment generating function is straightforward).

\noindent (c) We first note that the right-hand side of \eqref{eq:dens} is nonnegative. This follows from the equality $Z_\alpha-Q_1=M_{\alpha,\,1}Z_\alpha^\prime$, where $Z_\alpha^\prime:=Q_2+M_{\alpha,\,2} Q_3+M_{\alpha,\,2} M_{\alpha,\,3} Q_4+\ldots$ has the same distribution as $Z_\alpha$ and is independent of $M_{\alpha,\,1}$. Using the latter equality we further infer, for $t>0$,
\begin{multline*}
\mmp\{Z_\alpha-Q_1>t\}=\mmp\{M_{\alpha,\,1}Z_\alpha^\prime>t\}=\alpha\int_0^1 \mmp\{Z_\alpha>t/y\}y^{\alpha-1}{\rm d}y\\=\alpha t^\alpha \int_t^\infty v^{-\alpha-1}\mmp\{Z_\alpha>v\}{\rm d}v.
\end{multline*}
Since the function $v\mapsto v^{-\alpha-1}\mmp\{Z_\alpha>v\}$ is continuous on $(0,\infty)$, differentiating the latter equality we obtain $$\alpha^{-1}tq_\alpha(t)=\mmp\{Z_\alpha>t\}-\alpha t^\alpha\int_t^\infty v^{-\alpha-1}\mmp\{Z_\alpha>v\}{\rm d}v= \mmp\{Z_\alpha>t\}-\mmp\{Z_\alpha-Q_1>t\}.$$ Thus, \eqref{eq:dens} does indeed hold.

Fix any $c>1$. To prove \eqref{eq:asymp1}, we first observe that the relation 
$$\lim_{t\to\infty}\frac{\log \mmp\{Z_\alpha>ct\}}{\log \mmp\{Z_\alpha>t\}}=c$$ which holds according to \eqref{eq:goldie} entails $$\lim_{t\to\infty}\frac{\mmp\{Z_\alpha>ct\}}{\mmp\{Z_\alpha>t\}}=0.$$ Using
\begin{multline*}
0\leq \frac{\mmp\{Z_\alpha-Q_1>t\}}{\mmp\{Z_\alpha>t\}}=\frac{\alpha t^\alpha \int_t^{ct}v^{-\alpha-1}\mmp\{Z_\alpha>v\}{\rm d}v+\alpha t^\alpha \int_{ct}^\infty v^{-\alpha-1}\mmp\{Z_\alpha>v\}{\rm d}v}{\mmp\{Z_\alpha>t\}}\\\leq 1-c^{-\alpha}+ c^{-\alpha}\frac{\mmp\{Z_\alpha>ct\}}{\mmp\{Z_\alpha>t\}}
\end{multline*}
and letting $t\to\infty$ and then $c\to 1+$ we conclude that $$\lim_{t\to\infty}\frac{\mmp\{Z_\alpha-Q_1>t\}}{\mmp\{Z_\alpha>t\}}=0.$$ With this at hand, \eqref{eq:asymp1} follows from \eqref{eq:dens}.

\noindent {\bf Acknowledgement}. We are grateful to the two anonymous referees, whose careful reading and helpful comments enabled us to correct several oversights and improve the overall presentation. This work
was supported by the National Research Foundation of Ukraine (project
2023.03/0059 ‘Contribution to modern theory of random series’). A. Iksanov thanks Vitali Wachtel for a useful discussion at an initial stage of the present work.

\end{document}